\newtheorem{example}{Example}
\newtheorem{defi}{Definition}
\begin{document}
\begin{frontmatter}

\title{On the Complexity of Hilbert Refutations for Partition}

\vspace{-25pt}

\author{S. Margulies}
\address{Department of Mathematics,
Pennsylvania State University,
State College, PA
}
\ead{margulies@math.psu.edu}

\author{S. Onn}
\address{Industrial Engineering \& Management,
Technion - Israel Institute of Technology,
Haifa, Israel
}
\ead{onn@ie.technion.ac.il}

\author{D.V. Pasechnik}
\address{School of Physical and Mathematical Sciences,
Nanyang Technological University,
Singapore
}
\ead{dima@ntu.edu.sg}


\begin{abstract}
Given a set of integers $W$, the \textsc{Partition} problem determines whether $W$ can be divided into two disjoint subsets with equal sums. We model the \textsc{Partition} problem as a system of polynomial equations, and then investigate the complexity of a Hilbert's Nullstellensatz refutation, or certificate, that a given set of integers is not partitionable. We provide an explicit construction of a minimum-degree certificate, and then demonstrate that the \textsc{Partition} problem is equivalent to the determinant of a carefully constructed matrix called the partition matrix. In particular, we show that the determinant of the partition matrix is a polynomial that factors into an iteration over all possible partitions of $W$. 
\end{abstract}

\begin{keyword}
Hilbert's Nullstellensatz \sep linear algebra \sep partition
\end{keyword}

\end{frontmatter}

\section{Introduction} 

The NP-complete problem \textsc{Partition} \cite{garey_and_johnson} is the question of deciding whether or not a given set of integers $W = \{w_1,\ldots,w_n\}$ can be broken into two sets, $I$ and $W \setminus I$, such that the sums of the two sets are equal, or that $\sum_{w \in I}w = \sum_{w \in W \setminus I} w$. Since it is widely believed that $\text{NP} \neq \text{coNP}$, it is interesting to study various types of \emph{refutations}, or certificates for the \emph{non}-existence of a partition in a given set $W$. 

In this paper, we study the certificates provided by Hilbert's Nullstellensatz (see \cite{Alo,AT,DLMO,Lov,Onn} and references therein). Given an algebraically-closed field $\mathbb{K}$ and a set of polynomials $f_1,\ldots,f_s \in \mathbb{K}[x_1,\ldots,x_n]$, Hilbert's Nullstellensatz states that the system of polynomial equations $f_1 = f_2 = \cdots = f_s = 0$ has \emph{no} solution if and only if there exist polynomials $\beta_1,\ldots,\beta_s \in \mathbb{K}[x_1,\ldots,x_n]$ such that
$1 = \sum_{i=1}^{s}\beta_if_i~.$
We measure the complexity of a given certificate in terms of the size of the $\beta$ coefficients, since these are the unknowns we must discover in order to demonstrate the \emph{non}-existence of a solution to $f_1 = f_2 = \cdots = f_s = 0$. Thus, we measure the degree of a Nullstellensatz certificate as $d = \max\{\deg(\beta_1),\ldots, \deg(\beta_s)\}$.

There is a well-known connection between Hilbert's Nullstellensatz and a particular sequence of linear algebra computations.
These sequences have been studied from both a theoretical perspective \cite{buss, DLMO}, and a computational perspective \cite{susan_issac, susan_jsc}. When the polynomial ideal contains $x_i^2 - x_i$ for each variable (thus forcing the variety to contain only 0/1 points), these sequences have also been explored as algebraic proof systems \cite{beame_null, clegg, impag, raz}. Additionally, D. Grigoriev demonstrates a linear lower bound for the knapsack problem in \cite{grigoriev} (see also \cite{GHP}), and Buss and Pitassi \cite{buss} show that a polynomial system loosely based upon the ``pigeon-hole principle" requires a $\lfloor \log n\rfloor - 1$ Nullstellensatz degree certificate. However, when the system of polynomial equations $f_1,\ldots,f_s$ models an NP-complete problem, the degree $d$ is likely to grow at least linearly with the size of the underlying NP-complete instance \cite{susan_thesis}. In other words, as long as $\text{P} \neq \text{NP}$, the certificates should be hard to find (i.e., the size of the linear systems involved should be exponential in the size of the underlying instance), and as long as $\text{NP} \neq \text{coNP}$, the certificates should be hard to verify (i.e., the certificates should contain an exponential number of monomials). 

For example, consider the NP-complete problem of finding an independent set of size $k$ in a graph $G$. Recall that an independent set is a set of pairwise non-adjacent vertices. This problem was modeled by Lov\'asz \cite{Lov} as a system of polynomial equations as follows:

\begin{minipage}{0.6\linewidth}
\begin{align*}
x_i^2-x_i&=0~, \mbox{ for every vertex } i \in V(G)~, \\
x_ix_j&=0~, \hbox{ for every edge } (i,j) \in E(G)~, 
\end{align*}
\end{minipage}
\begin{minipage}{0.2\linewidth}
\begin{align*}
\text{and} \quad -k + \sum_{i=1}^n x_i &=0~.
\end{align*}
\end{minipage}

\vspace{5pt}
Clearly, this system of polynomial equations has a solution if and only the underlying graph $G$ has an independent of size $k$. For example, consider the Tur\'an graph $T(5,3)$.
By inspection, we see that size of the largest independent set in $T(5,3)$ is two. Therefore, there is \emph{no} independent set of size three, and using the connection between Hilbert's Nullstellensatz and linear algebra (described more thoroughly in Sec \ref{sec_pm}), the authors of \cite{DLMO} produce the following certificate:

\begin{minipage}{0.15\linewidth}
\begin{center}
\includegraphics[scale=0.20, trim=100 580 0 90]{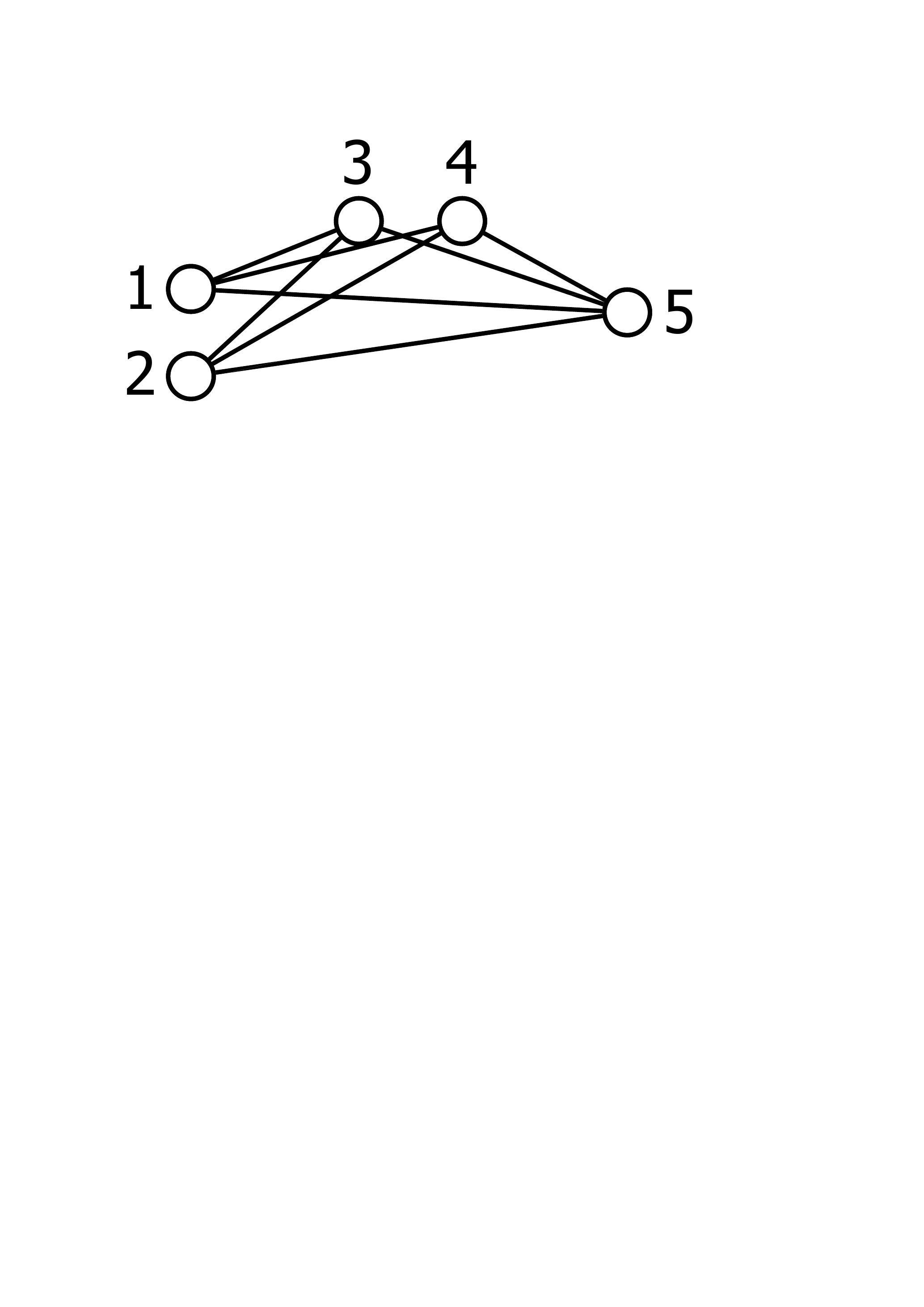}
Tur\'an graph $T(5,3)$
\end{center}
\end{minipage}
\begin{minipage}{0.85\linewidth}
\footnotesize{
\begin{align*}
& \hspace{14pt} \bigg(\frac{1}{3}x_4 + \frac{1}{3}x_2 + \frac{1}{3}\bigg)x_1x_3 + \bigg(\frac{1}{3}x_2 + \frac{1}{3}\bigg)x_1x_4 + \bigg(\frac{1}{3}x_2 + \frac{1}{3}\bigg)x_1x_5 +  \bigg(\frac{1}{3}x_4 + \frac{1}{3}\bigg)x_2x_3  +\\
& \hspace{14pt}\bigg(\frac{1}{3}\bigg)x_2x_4 + \bigg(\frac{1}{3}\bigg)x_2x_5 + \bigg(\frac{1}{3}x_4 + \frac{1}{3}\bigg)x_3x_5 + \bigg(\frac{1}{3}\bigg)x_4x_5 + \bigg(\frac{1}{3}x_2 + \frac{1}{6}\bigg)(x_1^2 - x_1) + \\
&\hspace{14pt} \bigg(\frac{1}{3}x_1 + \frac{1}{6}\bigg)(x_2^2 - x_2) + \bigg(\frac{1}{3}x_4 + \frac{1}{6}\bigg)(x_3^2 - x_3) + \bigg(\frac{1}{3}x_3 + \frac{1}{6}\bigg)(x_4^2 - x_4) + \bigg(\frac{1}{6}\bigg)(x_5^2 - x_5) +
\end{align*}}\normalsize
\end{minipage}
\begin{align*}
 \underbrace{\bigg(-\frac{1}{3}\big(x_1x_2 + x_3x_4\big) - \frac{1}{6}\big(x_1 + x_2 + x_3 + x_4 + x_5\big) - \frac{1}{3} \bigg)}_{\beta_1}(x_1 +  x_2 + x_3 + x_4 + x_5 - 3) &= 1~.
\end{align*}
The combinatorial interpretation of this algebraic identity is unexpectedly clear: the size of the largest independent set is the degree of the Nullstellensatz certificate (i.e., the largest monomial $x_1x_2$ corresponds to the maximum independent set formed by vertices $\{1,2\}$), and the coefficient $\beta_1$ contains one monomial for each independent set in $G$. The combinatorial interpretation of these certificates is proven  in \cite{DLMO} by De Loera et al. only in terms of monomials: the relationship between the numbers  such as $1/3$ and $1/6$ and the independent sets of the underlying graph is not clear. 

In this paper, we model the \textsc{Partition} problem as a system of polynomial equations, and then present a combinatorial interpretation of an associated minimum-degree Nullstellensatz certificate. However, the focus of our combinatorial interpretation is not only on the relationship between partitions and monomials, but also on the relationship between partitions and numeric coefficients (i.e., the numbers $1/3$ and $1/6$). In Section \ref{sec_part_model}, we present an algebraic model of the partition problem and describe a minimum-degree Nullstellensatz certificate. In Section \ref{sec_pm}, we describe the connection between Hilbert's Nullstellensatz and linear algebra, leading to the construction of a \emph{square} system of linear equations, forming what we  call the \emph{partition matrix}. In Section \ref{sec_det}, we prove our main result: the determinant of the partition matrix represents a brute-force iteration over \emph{all} the possible partitions of the set $W$, a polynomial we refer to as the \emph{partition polynomial}.

We conclude our introduction with an example. Let $W = \{w_1,w_2,w_3, w_4\}$, and we see that the determinant of the associated \emph{partition matrix} is as follows:

\begin{minipage}{0.5\linewidth}
\[
\det\left(\left[ \footnotesize{ \begin{array}{cccccccc}
\vspace{-5pt}
w_4 & w_3 & w_2 & w_1 & 0 & 0 & 0 & 0\\ 
\vspace{-5pt}
w_3 & w_4 & 0 & 0 & w_2 & w_1 & 0 & 0\\
\vspace{-5pt}
w_2 & 0 & w_4 & 0 & w_3 & 0 & w_1 & 0\\
\vspace{-5pt}
w_1 & 0 & 0 & w_4 & 0 & w_3 & w_2 & 0\\
\vspace{-5pt}
0 & w_2 & w_3 & 0 & w_4 & 0 & 0 & w_1\\
\vspace{-5pt}
0 & w_1 & 0 & w_3 & 0 & w_4 & 0 & w_2\\
\vspace{-5pt}
0 & 0 & w_1 & w_2 & 0 & 0 & w_4 & w_3\\
\vspace{-5pt}
0 & 0 & 0 & 0 & w_1 & w_2 & w_3 & w_4\\
\vspace{-10pt}
\phantom{=}
\end{array} \normalsize} \right]\right)=~
\]
\end{minipage}
\hspace{-17pt}\begin{minipage}{0.5\linewidth}
\footnotesize{
\begin{align*}
&\phantom{=}(w_1+w_2+w_3+w_4)(-w_1+w_2+w_3+w_4)\\
&\phantom{=}(w_1-w_2+w_3+w_4)(w_1+w_2-w_3+w_4)\\
&\phantom{=}(-w_1+w_2-w_3+w_4)(-w_1-w_2+w_3+w_4)\\
&\phantom{=}(w_1-w_2-w_3+w_4)(-w_1-w_2-w_3+w_4)~.
\end{align*}}\normalsize
\end{minipage}

\vspace{5pt}
Thus, the determinant of the \emph{partition matrix} is indeed a brute-force iteration over every possible partition of $W$: the \emph{partition polynomial}.

\section{\large{Partitions and a System of Polynomial Equations}} \label{sec_part_model} The \textsc{Partition} problem determines if a given set of integers $W = \{w_1,\ldots, w_{n}\}$ can be divided into two sets, $I$ and $W \setminus I$ such that $\sum_{w \in I}w = \sum_{w \in W \setminus I} w$. In this section, we describe a system of polynomial equations that models this question, and discuss the degree and monomials in an associated minimum-degree Nullstellensatz certificate.
\begin{prop} \label{prop_enc_part} Given a set of integers $W = \{w_1,\ldots, w_{n}\}$, the following system of polynomial equations
\begin{align*}
x_{i}^2 - 1 = 0~, & \quad \text{for $1 \leq i \leq n$}~,  \quad \quad \text{and} \quad \sum_{i = 1}^{n}w_ix_i =0~.
\end{align*}
has a solution if and only if there exists a partition of $W$ into two sets, $I \subseteq W$ and $W \setminus I$, such that $\sum_{w \in I}w = \sum_{w \in W \setminus I} w$~.
\end{prop}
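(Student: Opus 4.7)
The plan is to exploit the rigidity imposed by the equations $x_i^2 - 1 = 0$, which force every coordinate of any solution to lie in $\{-1, +1\}$, and then to read off a partition directly from the sign pattern of such a solution. This gives a bijection between solutions of the system and equal-sum partitions of $W$.

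First I would establish the forward direction. Assume $(x_1, \ldots, x_n) \in \mathbb{K}^n$ is a solution. From $x_i^2 = 1$ we conclude $x_i \in \{-1, +1\}$ for each $i$ (this uses that $\mathbb{K}$ has characteristic zero, or at least that $-1 \neq 1$, so that the factorization $(x_i - 1)(x_i + 1) = 0$ yields only these two roots). Define $I := \{w_i \in W : x_i = +1\}$, so that $W \setminus I = \{w_i : x_i = -1\}$. Substituting into the linear equation gives
\begin{equation*}
0 \;=\; \sum_{i=1}^n w_i x_i \;=\; \sum_{w_i \in I} w_i \;-\; \sum_{w_i \in W\setminus I} w_i,
\end{equation*}
which rearranges to $\sum_{w \in I} w = \sum_{w \in W \setminus I} w$, as required.

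For the converse direction, suppose $I \subseteq W$ witnesses an equal-sum partition. Define $x_i = +1$ if $w_i \in I$ and $x_i = -1$ if $w_i \in W \setminus I$. Clearly $x_i^2 - 1 = 0$ for every $i$, and the computation above run in reverse shows $\sum_{i=1}^n w_i x_i = \sum_{w \in I} w - \sum_{w \in W \setminus I} w = 0$. Hence $(x_1, \ldots, x_n)$ is a solution.

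There is no real obstacle here: the proposition is essentially a dictionary translating $\pm 1$ assignments into two-block partitions of $W$. The only subtlety worth flagging explicitly is the dependence on the ambient field, which must distinguish $+1$ from $-1$ in order for the variables to encode a genuine binary choice; this is automatic under the standing assumption that $\mathbb{K}$ is algebraically closed (and of characteristic zero, as needed elsewhere in the paper). The statement is thus best viewed as the combinatorial setup underlying the subsequent degree analysis and the construction of the partition matrix.
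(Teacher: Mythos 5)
Your proof is correct and takes essentially the same approach as the paper's (one-line) proof: the equations $x_i^2-1=0$ force $x_i=\pm 1$, and the sign pattern of a solution directly encodes the two sides of an equal-sum partition. You simply spell out both directions of the dictionary more explicitly than the paper does.
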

\noindent \textit{Proof:} The variables $x_i$ can take on the values of $\pm 1$. Thus, we relate partitions to solutions by placing integers $w_i$ with $+1$ $x_i$ values on one side of the partition and integers $w_i$ with $-1$ $x_i$ values on the other. \hfill $\Box$\\

Let $[n]$ denote the set of integers $\{1,\ldots, n\}$ and let $S^n_k$ denote the set of $k$-subsets of $[n]$. For $S \in S^n_k$, let $x^{S}$ denote the corresponding square-free monomial of degree $k$ in $n$ variables. For example, given $S = \{1,3,4\} \subseteq [5]$, the corresponding monomial $x^S = x_1x_3x_4$. Additionally, let $S^{n \setminus i}_k$ denote the $k$-subsets of  $[n] \setminus i$~.
\begin{thm} \label{thm_min_cert} Given a set of non-partitionable integers $W=\{w_1,\ldots,w_n\}$ encoded as a system of polynomial equations according to Prop. \ref{prop_enc_part}, there exists a minimum-degree Nullstellensatz certificate for the \emph{non}-existence of a partition of $W$ as follows:
\begin{align*}
1 &= \sum_{i=1}^n\Big(\sum_{ \stackrel{k \text{~even}}{k \leq n - 1}}\sum_{S \in S^{n \setminus i}_k}c_{i,S}x^S\Big)(x_i^2 - 1) + \Big(\sum_{ \stackrel{k \text{~odd}}{k \leq n}}\sum_{S \in S^n_k}b_Sx^S\Big)\Big(\sum_{i=1}^nw_ix_i\Big)~.
\end{align*}
Moreover, every Nullstellensatz certificate for the system of equations defined by Prop. \ref{prop_enc_part} contains one monomial for each of the odd parity subsets of each $S^n_k$, and one monomial for each of the even parity subsets of each $S^{n \setminus i}_k$.
\end{thm}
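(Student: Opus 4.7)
The plan is to work in the quotient algebra $R = \mathbb{K}[x]/\langle x_1^2 - 1, \ldots, x_n^2 - 1\rangle$, which under evaluation at $\pm 1$ tuples is isomorphic to $\mathbb{K}^{\{\pm 1\}^n}$ and has the squarefree monomials $\{x^S : S \subseteq [n]\}$ as a $\mathbb{K}$-basis. Reducing any identity $1 = \sum_i \beta_i(x_i^2-1) + \gamma L$ modulo $\langle x_i^2-1\rangle$ gives $\gamma L \equiv 1$ in $R$, where $L = \sum_i w_i x_i$; so $L$ must be a unit. Conversely, by Proposition~\ref{prop_enc_part}, non-partitionability of $W$ is equivalent to $L(\epsilon) \neq 0$ for every $\epsilon \in \{\pm 1\}^n$, i.e., $L$ being a unit in $R$.

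First I would establish the existence and shape of $\gamma$. Let $\gamma$ be the unique squarefree polynomial representing $L^{-1}$ in $R$. Splitting $\gamma = \gamma^{\mathrm{even}} + \gamma^{\mathrm{odd}}$ by total-degree parity, the equation $\gamma L = 1$ in $R$ forces $\gamma^{\mathrm{even}} L \equiv 0$ (its parity is odd, while $1$ is even); as $L$ is a unit, $\gamma^{\mathrm{even}} \equiv 0$ in $R$, hence $\gamma^{\mathrm{even}} = 0$ by squarefreeness. Thus $\gamma = \sum_{k \text{ odd}} \sum_{S \in S^n_k} b_S x^S$ with uniquely determined coefficients. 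Next I would derive $\beta_i$ by expanding $\gamma L$: each summand $b_S w_j x^S x_j$ is squarefree when $j \notin S$, and when $j \in S$ equals $b_S w_j x^{S \setminus j} + b_S w_j (x_j^2 - 1) x^{S \setminus j}$. Collecting by index $j$,
\[
\gamma L = (\gamma L)_{\mathrm{sq}} + \sum_i (x_i^2-1)\, w_i \sum_{\substack{T \subseteq [n]\setminus i\\ |T|\text{ even}}} b_{T\cup i}\, x^T.
\]
Since $\gamma L \equiv 1$ in $R$ and $1$ is its own squarefree representative, $(\gamma L)_{\mathrm{sq}} = 1$. Setting $c_{i,T} = -w_i b_{T \cup i}$ then yields $\beta_i = \sum_T c_{i,T}\, x^T$ of the stated shape, and the identity $1 = \sum_i \beta_i(x_i^2-1) + \gamma L$ holds as polynomials.

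For the moreover clause, $\gamma$ is uniquely determined as $L^{-1}$ in $R$, so its squarefree odd-parity representative carries a prescribed coefficient at each odd-size $S \subseteq [n]$; and the explicit formula above prescribes a coefficient at each even-size $T \subseteq [n]\setminus i$ in $\beta_i$. Any certificate, after applying the substitution symmetry $x_j \mapsto -x_j$ (which fixes $x_i^2-1$ and negates $L$) and averaging, and then reducing $x_j^2 \mapsto 1$ inside $\gamma$ while absorbing the error into the appropriate $\beta_j$, collapses to this same canonical form, so every certificate contains the listed monomials. The degree bound $\max(\deg\beta_i, \deg\gamma) \le n$ coming from the construction is minimum because a strictly smaller-degree $\gamma$ would omit some odd-size subset from its support, contradicting the uniqueness of $L^{-1}$ in $R$. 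The main technical obstacle is the bookkeeping in the expansion of $\gamma L$: tracking how each non-squarefree monomial is produced and organized into the specific $(x_j^2-1)$-multiples that become $\beta_j$.
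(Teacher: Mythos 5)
The paper does not actually prove this theorem: it defers to the reduction-to-squarefree-monomials strategy of De Loera et al.\ and the accompanying ``infinite chain of higher and higher degree monomials'' argument. Your construction of the certificate is therefore a genuinely different and more direct route, and the existence half is correct and complete: since the squarefree monomials form a basis of $R=\mathbb{K}[x]/\langle x_i^2-1\rangle$ and non-partitionability makes $L=\sum_i w_ix_i$ a unit there, the parity argument pinning $\gamma=L^{-1}$ to odd-degree support and the bookkeeping $c_{i,T}=-w_ib_{T\cup i}$ (which is exactly Eq.~(\ref{eq_c_alone}) of the paper) produce precisely the stated certificate. This is cleaner than the paper's sketch and makes the relation between the $b$'s and $c$'s transparent. (One small caveat: everything here needs $\operatorname{char}\mathbb{K}\neq 2$, both for $R\cong\mathbb{K}^{\{\pm1\}^n}$ and for your averaging step; the paper is silent on this.)

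The gap is in the ``moreover'' clause and in minimality. Both claims reduce, as you correctly set them up, to the assertion that \emph{every} coefficient $b_S$ of the canonical $\gamma=L^{-1}$ is nonzero (and hence every $c_{i,T}=-w_ib_{T\cup i}$ is nonzero): your symmetrize-and-reduce argument only shows that if the reduced canonical form has a nonzero coefficient on $x^S$, then the original certificate contained some monomial in that parity class. You never prove the nonvanishing. In particular, ``a strictly smaller-degree $\gamma$ would omit some odd-size subset from its support, contradicting the uniqueness of $L^{-1}$ in $R$'' begs the question: uniqueness of $L^{-1}$ says nothing about which of its coefficients vanish, and indeed $b_S=2^{-n}\sum_{\epsilon\in\{\pm1\}^n}\epsilon^S/L(\epsilon)$ can vanish for degenerate weight vectors (e.g.\ $w_i=0$), so some hypothesis on $W$ must actually be used. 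This nonvanishing is exactly what the paper's deferred DLMO-style argument is designed to establish (absence of one partition monomial forces an unbounded chain of higher-degree monomials), so your proposal as written replaces the one genuinely nontrivial step of the theorem with an appeal to uniqueness that does not deliver it.
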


Via Thm. \ref{thm_min_cert}, we see that the degree of the certificate is $n$ for $n$ odd, and $n-1$ for $n$ even. Furthermore, by considering the monomials present in the certificate as identifying the integers present on \emph{one} side of a partition, we see that the monomials represent a brute-force iteration over every possible partition of $W$. We note that we identify the constant terms $c_{i,\emptyset}$ with the case of placing every integer on one side of the partition and the empty set on the other. Thus, this result is similar to the independent set result (De Loera et al., \cite{DLMO}) reviewed in the introduction. However, in this paper, we are interested not only in a combinatorial interpretation of the monomials, but also in a combinatorial interpretation of the unknowns $c_{i,S}, b_S$.

The proof of Thm. \ref{thm_min_cert} is virtually identical to the proof of the independent set result described in \cite{DLMO}, with no new techniques or insights. The essential strategy of the proof is to consider an arbitrary minimum-degree Nullstellensatz certificate, and then ``reduce" the certificate to a version containing only square-free monomials by adding and subtracting different polynomials from the certificate (or taking the monomials modulo the ideal). It is then possible to demonstrate that every square-free monomial representing a partition must be present in the certificate; otherwise, the certificate would require an infinite chain of higher and higher degree square-free monomials in order to simplify to 1. Since the technical details of this strategy were carefully presented in \cite{DLMO}, we omit the formal proof here and simply state Thm. \ref{thm_min_cert} as a result.
\begin{example}\label{ex_cert}
\emph{
The set of integers $W = \{1, 3, 5, 2\}$ is \emph{not} partitionable. We encode this problem as a system of polynomial equations as follows:
\begin{align*}
x_1^2 - 1 = 0~, \quad x_2^2 - 1=0~, \quad x_3^3 - 1=0~, \quad x_4^2 - 1=0~,\quad x_1 + 3x_2 + 5x_3 + 2x_4 = 0~.
\end{align*}
Since $W$ is \emph{not} partitionable, this system of equations has \emph{no} solution, and a Nullstellensatz certificate exists. Here is the minimum-degree certificate described by Thm. \ref{thm_min_cert}:
\footnotesize{
\begin{align*}
1 &= \bigg(-\frac{155}{693}+\frac{842}{3465}x_2x_3-\frac{188}{693}x_2x_4+\frac{908}{3465}x_3x_4\bigg)(x_1^2-1)+ \bigg(-\frac{1}{231}+\frac{842}{1155}x_1x_3-\frac{188}{231}x_1x_4\\
&\phantom{=} +\frac{292}{1155}x_3x_4\bigg)(x_2^2-1)
+ \bigg(-\frac{467}{693}+\frac{842}{693}x_1x_2+\frac{908}{693}x_1x_4+\frac{292}{693}x_2x_4 \bigg)(x_3^2-1)+ \bigg(-\frac{68}{693}-\frac{376}{693}x_1x_2\\ 
&\phantom{=} +\frac{1816}{3465}x_1x_3+\frac{584}{3465}x_2x_3 \bigg)(x_4^2-1)+ \bigg(\frac{155}{693}x_1+\frac{1}{693}x_2+\frac{467}{3465}x_3+\frac{\mathbf{34}}{\mathbf{693}}x_4-\frac{842}{3465}x_1x_2x_3\\
&\phantom{=} +\frac{188}{693}x_1x_2x_4-\frac{908}{3465}x_1x_3x_4-\frac{292}{3465}x_2x_3x_4\bigg)(x_1+3x_2+5x_3+2x_4)~.
\end{align*}}\normalsize
Note that the coefficient for $(x_1^2 - 1)$ contains only even degree monomials that do \emph{not} contain $x_1$ (similarly for $(x_2^2-1)$, etc.) and that the coefficient for $(x_1+3x_2+5x_3+2x_4)$ contains every possible odd degree monomial in four variables. The combinatorial interpretation of a number such as $34/693$ is explicitly demonstrated in Ex. \ref{ex_b4}.\hfill $\Box$
}
\end{example}

\section{\large{The Partition Matrix: Definition and Properties}} \label{sec_pm} In this section, we explore the well-known connection between Hilbert's Nullstellensatz and linear algebra, in terms of the minimum-degree certificate defined in Thm. \ref{thm_min_cert}:
\begin{align*}
1 &= \sum_{i=1}^n\Big(\sum_{ \stackrel{k \text{~even}}{k \leq n - 1}}\sum_{S \in S^{n \setminus i}_k}c_{i,S}x^S\Big)(x_i^2 - 1) + \Big(\sum_{ \stackrel{k \text{~odd}}{k \leq n}}\sum_{S \in S^n_k}b_Sx^S\Big)\Big(\sum_{i=1}^nw_ix_i\Big)~.
\end{align*}
We begin by defining \emph{graded reverse lexicographic order}. We then construct a $2^{n-1} \times 2^{n-1}$ square system of linear equations containing only the unknowns $b$. When ordered according to \emph{graded reverse lexicographic order}, this square matrix is known as the \emph{partition matrix}. We next prove a series of properties of the partition matrix (including symmetry), and conclude by expressing the partition matrix as the sum of a very specific set of permutation matrices. The properties of these permutation matrices allow for a simple and elegant proof of the main result in Section \ref{sec_det}.

\subsection{Graded Reverse Lexicographic Order as a Tree}  \label{ssec_ord} Since we are dealing only with square-free monomials, we define \emph{graded reverse lexicographic order} (denoted $\succeq_D$) as follows. Given $S \in S^n_k$, we represent $S$ as a vector in $\{0,1\}^n$ (denoted $v_S$) by setting $v_S[i] = 1$ if $i \in S$ and $v_S[i] = 0$ otherwise. For example, let $S = \{2,3,7\} \in S^7_3$. Then $v_S = \{0,1,1,0,0,0,1\}$. Given distinct $S \in S^n_k$ and $S' \in S^n_{k'}$, then $S \succeq_D S'$ in two cases: 1) if $k > k'$, or 2) if $k = k'$ and the \emph{right-most nonzero entry} of $v_S - v_{S'}$ is negative. For example, $\{2,3,4,5\} \succeq_D \{1,2,5\}$, and $\{2,3\} \succeq_D \{1,4\}$. 

In order to prove specific properties of the partition matrix, we use a slightly less common, recursive definition of graded reverse lexicographic order. First, we order the $\binom{n}{n-1}$ subsets of $[n]$ in lexicographic order, creating sets $S_1,\ldots, S_{n}$. Next, the sets $S_1,\ldots,S_{n}$ are iterated, and for each $S_i$, the $\binom{n-1}{n-2}$ subsets of $S_i$ are iterated in lexicographic order, etc.. This order is pictorially represented as a tree in Ex. \ref{ex_order}.
\begin{example} \label{ex_order} \emph{Here we pictorially order the set of integers $[5]$ according to $\succeq_D$.}

\vspace{-15pt}
\begin{minipage}{0.22\linewidth}
\begin{center}
\includegraphics[page=1,scale=0.20, trim=0 0 0 0]{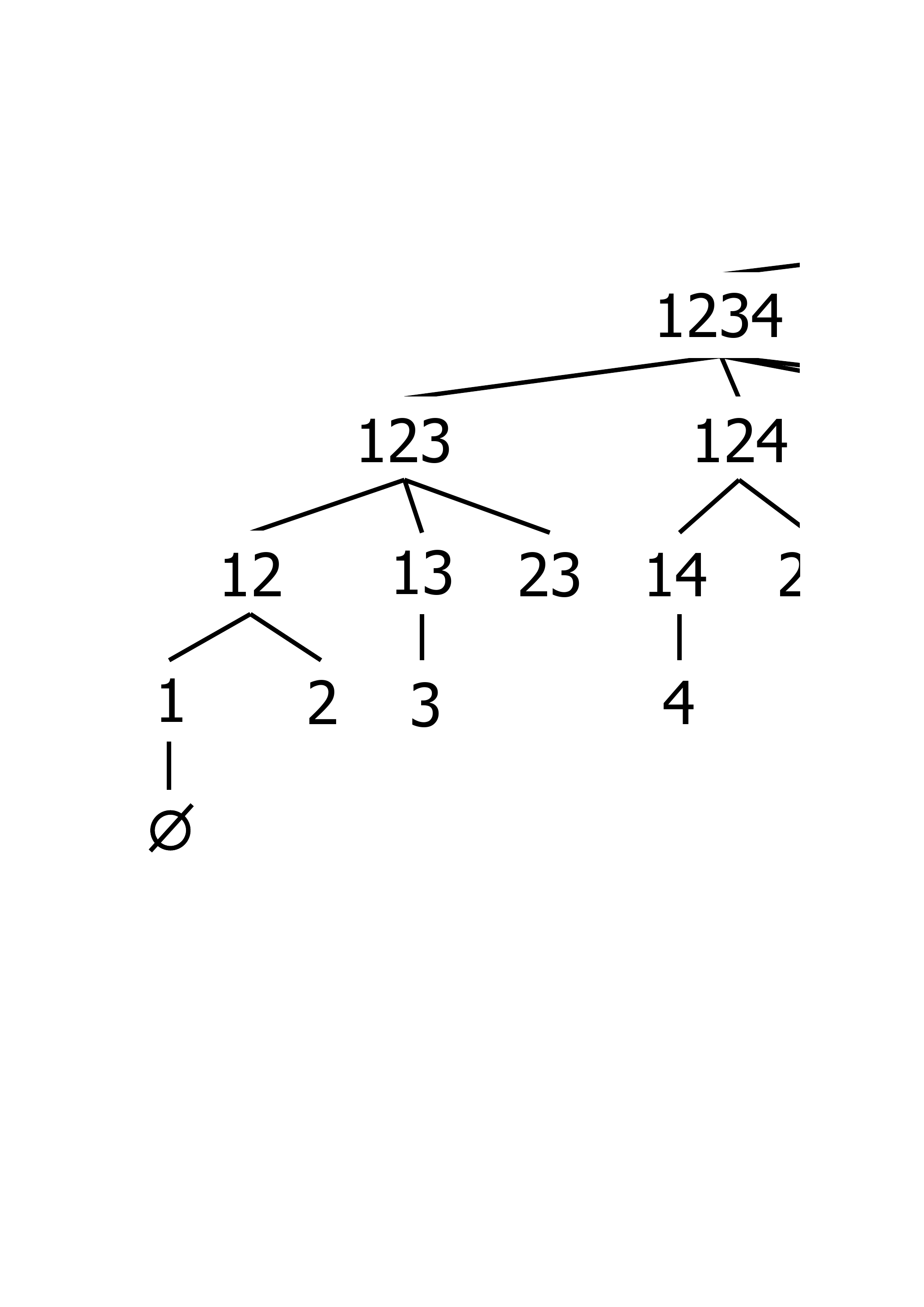}
\end{center}
\end{minipage}
\begin{minipage}{0.22\linewidth}
\begin{center}
\includegraphics[page=2,scale=0.20, trim=0 0 0 0]{order2.pdf}
\end{center}
\end{minipage}
\begin{minipage}{0.22\linewidth}
\begin{center}
\includegraphics[page=3,scale=0.20, trim=0 0 25 0]{order2.pdf}
\end{center}
\end{minipage}
\begin{minipage}{0.25\linewidth}
\begin{center}
\includegraphics[page=4,scale=0.20, trim=0 0 25 0]{order2.pdf}
\end{center}
\end{minipage}
\vspace{-45pt}

\emph{Using this tree, if two sets $S,S'$ are from different levels in the tree with $S$ higher than $S'$, then $S \succeq_D S'$. For example, $\{1245\}  \succeq_D \{234\}$. Additionally, if $S,S'$ are from the same level in the tree but $S$ appears further to the left than $S'$, then $S \succeq_D S'$. For example, $\{23\}  \succeq_D \{15\}$.
Additionally, observe that if the even and odd cardinality subsets of $[5]$ are iterated in $\succeq_D$ order, then the following pairing of even and odd subsets occurs:
\footnotesize{
\[
\begin{array}{c|c|c|c|c|c|c|c|c|c|c|c|c|c|c|c}
 {12345} & {123} & {124} & {134} & {234} & {125} & {135} & {235} & {145} & {245} & {345} & {1} & {2} & {3} & {4} & {5}\\[-2pt]
\hline
 {1234} & {1235} & {1245} & {1345} & {2345} & {12} & {13} & {23} & {14} & {24} & {34} & {15} & {25} & {35} & {45} & {\emptyset}
\end{array}
\]}\normalsize
Given a set $S$ in the pairing diagram above, if $5 \in S$, then $S$ is paired with $S \setminus 5$. If $5 \notin S$, then $S$ is paired with $S \cup 5$. This observation is proven in general in Prop. \ref{prop_pm}.\ref{prop_pm_lbl}.
}
 \hfill $\Box$
\end{example}
We refer to this tree as the \emph{order tree} of $[n]$. If two sets $S, S'$ are children of the same parent in the tree, we say that the sets are contained in the same \emph{block}. For example, $\{1,2,3\}$ and $\{2,3,4\}$ are in the same block, but $\{2,3,4\}$ and $\{1,2,5\}$ are not.

\subsection{The Partition Matrix} \label{ssec_pm} In this section, we demonstrate how to extract a $2^{n-1} \times 2^{n-1}$ matrix from the minimum-degree certificate of Thm. \ref{thm_min_cert}. We begin by considering the coefficients of $(x_i^2 - 1)$:
\begin{align*}
\Big(\sum_{ \stackrel{k \text{~even}}{k \leq n - 1}}\sum_{S \in S^{n \setminus i}_k}c_{i,S}x^S\Big)(x_i^2 - 1)~.
\end{align*}
We observe that each monomial $c_{i,S}x^S$ multiplies $(x_i^2 - 1)$, which implies that each $c_{i,S}$ appears in two equations (one corresponding to the monomial $x^Sx_i^2$, and one corresponding to the monomial $-x^S$). Thus, the unknown $c_{i,S}$ appears in the first equation with a positive coefficient, and the second equation with a negative coefficient. 
This allows us to sum the two equations, and cancel the $c$ unknowns in a cascading manner. For example, there is always one equation for the constant term:
\begin{align*}
-c_{1,\emptyset} - c_{2,\emptyset} - \cdots - c_{n,\emptyset} &= 1~.
\end{align*}
Notice that this equation sums to one, since the Nullstellensatz certificate simplifies to one. There is also always one equation for each $x_i^2$ monomial:
\begin{align}
b_iw_i + c_{i,\emptyset} &=0~. \label{eq_cb}
\end{align}
The $b_iw_i$ term appears in these equations since the product of
\begin{align*}
\Big(\sum_{ \stackrel{k \text{~odd}}{k \leq n}}\sum_{S \in S^n_k}b_Sx^S\Big)\Big(w_1x_1 + \cdots + w_nx_n\Big)~,
\end{align*}
contributes the term $b_ix_i \cdot w_ix_i = b_iw_ix_i^2$, among others.
Notice that Eq. \ref{eq_cb} sums to zero, since every monomial other than the constant term must cancel in a Nullstellensatz certificate. This set of $n+1$ equations yields the following subsystem:
\begin{align*}
-c_{1,\emptyset} - c_{2,\emptyset} - \cdots - c_{n,\emptyset} &= 1~, \quad \text{(constant term)}\\
b_1w_1 + c_{1,\emptyset} &=0~,\quad \text{($x_1^2$)}\\[-5pt]
\vdots \quad & \phantom{=} \hspace{8pt} \vdots\\[-5pt]
b_nw_n + c_{n,\emptyset} &=0~. \quad \text{($x_n^2$)}
\end{align*}
Summing these $n + 1$ equations together yields the following equation (in $b$ only): 
\begin{align*}
\sum_{i=1}^n b_iw_i &= 1~. 
\end{align*}
In general, let $S \subseteq [n]\setminus i$ be an even cardinality subset, and consider the two monomials $x^Sx_i^2$ and $x^S$. Then, the following $n - |S| + 1$ equations are always present in the extracted linear system:
\begin{align}
b_{S \cup i}w_i + c_{i,S} &= 0~, \quad \quad (x^Sx_i^2)~, \text{~~~for each $i \notin S$} \label{eq_c_alone}\\
\sum_{j \in S}b_{S \setminus j}w_j - \sum_{i \notin S}c_{i,S} &= 0~, \quad \quad (x^S)~. \nonumber
\end{align}
Summing up these $n - |S| + 1$ equations together yields the following equation (in $b$ only):
\begin{align*}
\sum_{j \notin S}b_{S \cup j}w_j  + \sum_{j \in S}b_{S \setminus j}w_j&= 0~. 
\end{align*}
\begin{defi} \label{def_pm} \emph{Given a set of integers $W = \{w_1,\ldots, w_n\}$, the coefficient matrix of the following square system of linear equations
\begin{align*}
\sum_{j \notin S}b_{S \cup j}w_j  + \sum_{j \in S}b_{S \setminus j}w_j&= 0~, \quad \text{for each $S \in \big(S^n_k \setminus \emptyset \big)$ with $|S|$ even}\\
\sum_{i=1}^n b_iw_i &= 1~,
\end{align*}
defines a $2^{n-1} \times 2^{n-1}$ matrix with columns indexed by the unknowns $b_S$ (corresponding to the $2^{n-1}$ odd cardinality subsets of $[n]$), and rows indexed by the sets $S$ (corresponding to the $2^{n-1}$ even cardinality subsets of $[n]$, including $\emptyset$). This matrix is the \emph{partition matrix}, denoted by $\Pi(W)$, with rows and columns ordered by graded reverse lexicographic order.}
\end{defi}

By studying Eq. \ref{eq_c_alone}, we see that each $c$ unknown appears in exactly one equation along with exactly one $b$ unknown. Thus, solving for the $b$ unknowns \emph{uniquely determines the entire certificate}, and determining  whether or not a given set $W$ is partitionable depends entirely on the \emph{determinant of the partition matrix}.
\begin{example} \label{ex_pm_4} \emph{
Let $W = \{w_1,w_2,w_3\}$. Via Thm. \ref{thm_min_cert}, the Nullstellensatz certificate is:
\begin{align*}
1 &= (c_{1,\emptyset} + c_{1,\{23\}}x_2x_3)(x_1^2-1) + (c_{2,\emptyset} + c_{2,\{13\}}x_1x_3)(x_2^2-1) + (c_{3,\emptyset} + c_{3,\{12\}}x_1x_2)(x_3^2-1)\\
&\phantom{=} + (b_1x_1+b_2x_2 + b_3x_3+b_{123}x_1x_2x_3)(w_1x_1+w_2x_2 + w_3x_3)~.
\end{align*}
If $W$ is not partitionable, there must exist an assignment to the unknowns $c$ and $b$ such that the certificate simplifies to one. In other words, the following system of linear equations has a solution:
\footnotesize
\[
\begin{array}{rrlrrl}
(x_1^2) & \quad c_{1,\emptyset} + b_1w_1 &=0~,&
(x_2x_3) &\quad  -c_{1,\{23\}} + b_2w_3 + b_3w_2 &=0~,
\\[-5pt]
(x_2^2)  &\quad  c_{2,\emptyset} + b_2w_2 &=0~,&
(x_1x_2x_3^2)  & \quad c_{3,\{12\}} + b_{123}w_3 &=0~,
\\[-5pt]
(x_3^2)  &\quad  c_{3,\emptyset} + b_3w_3 &=0~,&
(x_1x_2^2x_3)   &\quad c_{2,\{13\}} + b_{123}w_2 &=0~,
\\[-5pt]
(x_1x_2)  &\quad -c_{3,\{12\}}  + b_1w_2 + b_{2}w_1 &=0~,&
(x_1^2x_2x_3) & \quad c_{1,\{23\}} + b_{123}w_1 &=0~,
\\[-5pt]
(x_1x_3)  &\quad -c_{2,\{13\}} + b_1w_3 + b_3w_1 &=0~,&
(\text{constant term}) &\quad  -c_{1,\emptyset} - c_{2,\emptyset} - c_{3,\emptyset}&= 1~.
\end{array}
\] \normalsize
Following the simplifications described above, we extract a \emph{square} system of linear equations that contain only the $b$ unknowns from these equations:
\begin{align*}
b_{123}w_3  + b_1w_2 + b_{2}w_1 &=0~,  \quad  S = \{1,2\}~, &
b_{123}w_2 + b_1w_3 + b_3w_1 &=0~, \quad  S = \{1,3\}~,\\
b_{123}w_1 + b_2w_3 + b_3w_2  &=0~, \quad  S = \{2,3\}~, &
b_1w_1 + b_2w_2 + b_3w_3 &=1~. \quad S = \emptyset~.
\end{align*}
Ordering the columns as $\{b_{123}, b_1,b_2, b_3\}$, the \emph{partition matrix} is as follows:
\vspace{-10pt}
\[
\begin{array}{c}
\phantom{\big(}\{1,2\}\\[-7pt]
\phantom{\big(}\{1,3\}\\[-7pt]
\phantom{\big(}\{2,3\}\\[-7pt]
\phantom{\big(}\emptyset
\end{array}
\stackrel{
\begin{array}{cccc}
\hspace{-5pt}b_{123} & \hspace{2pt}b_1 & \hspace{2pt}b_2 & \hspace{4pt}b_3
\end{array}
}{
\left[ \begin{array}{cccc}
w_3 & w_2 & w_1 & 0\\[-7pt]
w_2 & w_3 & 0   & w_1 \\[-7pt]
w_1 & 0   & w_3 & w_2 \\[-7pt]
0   & w_1 & w_2 & w_3
\end{array} \right]}
\]
As a preview of our main result, we note that the determinant of this matrix is
\begin{align*} (w_1 + w_2 + w_3)(-w_1 + w_2 + w_3)(w_1 - w_2 + w_3)(-w_1 - w_2 + w_3)~,
\end{align*}
which represents a brute-force iteration over all of the possible partitions of the set $W$. This will be formally defined as the \emph{partition polynomial} in Sec. \ref{sec_det}.} \hfill $\Box$\end{example}

For the duration of this section, we collect a few essential facts about the partition matrix $\Pi(W)$. We also provide a slightly larger example of the partition matrix to demonstrate these properties.

\begin{example} \label{ex_pm_5_ord} \emph{
Given $W = \{w_1,\ldots,w_5\}$, here is the $16 \times 16$ partition matrix $\Pi(W)$.
\small{\[
\begin{array}{c|cccccccccccccccc}
 & {12345} & {123} & {124} & {134} & {234} & {125} & {135} & {235} & {145} & {245} & {345} & {1} & {2} & {3} & {4} & {5}\\
\hline \hline
\vspace{-5pt}
{1234} & w_5 & w_4 & w_3 & w_2 & w_1 & 0 & 0 & 0 & 0 & 0 & 0 & 0 & 0 & 0 & 0 & 0\\ 
\vspace{-5pt}
{1235} & w_4 & w_5 & 0 & 0 & 0 & w_3 & w_2 & w_1 & 0 & 0 & 0 & 0 & 0 & 0 & 0 & 0\\ 
\vspace{-5pt}
{1245} & w_3 & 0 & w_5 & 0 & 0 & w_4 & 0 & 0 & w_2 & w_1 & 0 & 0 & 0 & 0 & 0 & 0\\ 
\vspace{-5pt}
{1345} & w_2 & 0 & 0 & w_5 & 0 & 0 & w_4 & 0 & w_3 & 0 & w_1 & 0 & 0 & 0 & 0 & 0\\ 
\vspace{-5pt}
{2345} & w_1 & 0 & 0 & 0 & w_5 & 0 & 0 & w_4 & 0 & w_3 & w_2 & 0 & 0 & 0 & 0 & 0\\ 
\vspace{-5pt}
{12} & 0 & w_3 & w_4 & 0 & 0 & w_5 & 0 & 0 & 0 & 0 & 0 & w_2 & w_1 & 0 & 0 & 0\\ 
\vspace{-5pt}
{13} & 0 & w_2 & 0 & w_4 & 0 & 0 & w_5 & 0 & 0 & 0 & 0 & w_3 & 0 & w_1 & 0 & 0\\ 
\vspace{-5pt}
{23} & 0 & w_1 & 0 & 0 & w_4 & 0 & 0 & w_5 & 0 & 0 & 0 & 0 & w_3 & w_2 & 0 & 0\\ 
\vspace{-5pt}
{14} & 0 & 0 & w_2 & w_3 & 0 & 0 & 0 & 0 & w_5 & 0 & 0 & w_4 & 0 & 0 & w_1 & 0\\ 
\vspace{-5pt}
{24} & 0 & 0 & w_1 & 0 & w_3 & 0 & 0 & 0 & 0 & w_5 & 0 & 0 & w_4 & 0 & w_2 & 0\\ 
\vspace{-5pt}
{34} & 0 & 0 & 0 & w_1 & w_2 & 0 & 0 & 0 & 0 & 0 & w_5 & 0 & 0 & w_4 & w_3 & 0\\ 
\vspace{-5pt}
{15} & 0 & 0 & 0 & 0 & 0 & w_2 & w_3 & 0 & w_4 & 0 & 0 & w_5 & 0 & 0 & 0 & w_1\\ 
\vspace{-5pt}
{25} & 0 & 0 & 0 & 0 & 0 & w_1 & 0 & w_3 & 0 & w_4 & 0 & 0 & w_5 & 0 & 0 & w_2\\ 
\vspace{-5pt}
{35} & 0 & 0 & 0 & 0 & 0 & 0 & w_1 & w_2 & 0 & 0 & w_4 & 0 & 0 & w_5 & 0 & w_3\\ 
\vspace{-5pt}
{45} & 0 & 0 & 0 & 0 & 0 & 0 & 0 & 0 & w_1 & w_2 & w_3 & 0 & 0 & 0 & w_5 & w_4\\ 
\vspace{-5pt}
{\emptyset} & 0 & 0 & 0 & 0 & 0 & 0 & 0 & 0 & 0 & 0 & 0 & w_1 & w_2 & w_3 & w_4 & w_5 
\end{array}
\]   \vspace{-15pt} \hfill $\Box$ }\normalsize} 
\end{example}

\vspace{5pt}
\begin{prop} \label{prop_pm} Given $W = \{w_1,\ldots,w_n\}$, the $2^{n-1} \times 2^{n-1}$ partition matrix $\Pi(W)$ has the following properties:
\begin{enumerate}
	\item \label{prop_pm_once} The entry $w_i$ with $i = \{1,\ldots,n\}$  appears exactly once in each row and column.
	\item \label{prop_pm_lbl} If row $i$ is indexed by set $S \subseteq [n]$ (with $|S|$ even), and $n \in S$, then column $i$ is indexed by $S \setminus n$. If $n \notin S$, then column $i$ is indexed by $S \cup n$.
	\item \label{prop_pm_diag} All diagonal entries of $\Pi(W)$ are equal to $w_n$.
	\item \label{prop_pm_sym} $\Pi(W)$ is symmetric.
\end{enumerate}
\end{prop}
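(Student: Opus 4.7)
My plan is to prove the four claims in sequence, since (3) and (4) essentially reduce to (1) together with the order-preserving bijection underlying (2).

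For (1), I would read the row and column structure directly off Definition \ref{def_pm}. The row indexed by an even subset $S$ encodes the equation $\sum_{j \notin S} b_{S \cup j} w_j + \sum_{j \in S} b_{S \setminus j} w_j = 0$, in which every $j \in [n]$ contributes to exactly one of the two sums, so $w_j$ appears exactly once in row $S$. For the column indexed by an odd subset $T$, a row $S$ has a nonzero entry in column $T$ precisely when $S$ and $T$ differ in a single element $j$. For each $j \in [n]$ the unique even such $S$ is $T \setminus j$ when $j \in T$ and $T \cup j$ otherwise, and in either case the entry is $w_j$, so $w_j$ appears exactly once in column $T$.

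For (2), which is the technical heart of the proposition and the main obstacle of the proof, I plan to show that the map $f(S) = S \triangle \{n\}$ is an order-preserving bijection between the even and odd subsets of $[n]$ under $\succeq_D$. Once this is established, the $i$-th even subset in $\succeq_D$ order must be paired with the $i$-th odd subset, which is exactly the statement of (2). I would prove order-preservation by case analysis on whether $S_1$ and $S_2$ both contain $n$, both avoid $n$, or have mixed containment. When both contain or both avoid $n$, one has $v_{f(S_1)} - v_{f(S_2)} = v_{S_1} - v_{S_2}$ and the sizes of the images shift from those of $S_1, S_2$ by the same constant, so the degree comparison and the reverse-lex tie-breaker both carry over unchanged. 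In the mixed case, assume without loss of generality that $n \in S_1$ and $n \notin S_2$, so the position-$n$ entry of $v_{S_1} - v_{S_2}$ equals $+1$. If $|S_1| = |S_2|$ this forces $S_2 \succ_D S_1$ by reverse lex, while on the image side $|f(S_2)| = |S_2|+1 > |S_1|-1 = |f(S_1)|$ gives $f(S_2) \succ_D f(S_1)$ by degree. The subtle sub-case is $|S_1| = |S_2| + 2$, where $f$ makes the sizes equal: a direct calculation shows the position-$n$ entry of $v_{f(S_1)} - v_{f(S_2)}$ is $-1$, so the reverse-lex comparison yields $f(S_1) \succ_D f(S_2)$, matching $S_1 \succ_D S_2$ by degree. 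All remaining sub-cases reduce to a degree comparison on both sides.

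Claims (3) and (4) then follow quickly. For (3), the diagonal entry at position $(i,i)$ sits in the row indexed by some even $S$ and the column indexed by $f(S) = S \triangle \{n\}$, two subsets that differ in exactly the element $n$, so by (1) the entry is $w_n$. For (4), note that by (1) the entry of $\Pi(W)$ at position $(i,j)$ is $w_k$ when the row-subset $S$ and the column-subset $T$ satisfy $S \triangle T = \{k\}$, and is zero otherwise. At position $(j,i)$ the row- and column-subsets are $f(T)$ and $f(S)$; since $f$ is an involution, their symmetric difference $f(T) \triangle f(S) = (T \triangle \{n\}) \triangle (S \triangle \{n\}) = S \triangle T$, so positions $(i,j)$ and $(j,i)$ carry the same entry.
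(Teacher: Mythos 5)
Your proposal is correct, and while part (1) and part (3) coincide with the paper's treatment, parts (2) and (4) take a genuinely different route. For part (2) the paper argues via the recursive ``order tree'': $T_n$ is built by duplicating $T_{n-1}$, adjoining $n$ to every set in the copy, and overlaying the two trees, from which the row--column pairing is read off somewhat informally (``it is easy to see''). You instead prove directly from the degrevlex definition that $f(S) = S \,\Delta\, \{n\}$ is an order-preserving bijection between the even and odd subsets, with the case analysis on containment of $n$; your sub-cases check out (in particular the delicate case $|S_1| = |S_2|+2$, where the position-$n$ entry of $v_{f(S_1)} - v_{f(S_2)}$ is $-1$ and position $n$ is automatically the rightmost, is handled correctly), and this yields a more self-contained and rigorous argument at the cost of the structural picture the tree provides. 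For part (4) the divergence is even more pronounced: the paper tracks individual entries through a case analysis on $j<i$ versus $i<j$ and on whether $n \in S_i$, whereas you observe that the $(i,j)$ entry is $w_k$ precisely when the row label $S$ and column label $T$ satisfy $S \,\Delta\, T = \{k\}$, and that $f$ is an involution preserving symmetric differences, so $f(T) \,\Delta\, f(S) = S \,\Delta\, T$. This is cleaner and arguably the ``right'' proof of symmetry; it also makes transparent why part (3) holds. Both approaches are valid; yours trades the paper's pictorial recursion for a tighter algebraic argument.
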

\noindent \textit{Proof of Prop. \ref{prop_pm}.\ref{prop_pm_once}}: After inspecting the equation defining the partition matrix
\begin{align*}
\sum_{j \notin S}b_{S \cup j}w_j  + \sum_{j \in S}b_{S \setminus j}w_j&= 0~,
\end{align*}
where $S$ represents an even cardinality subset of $[n]$, 
it is evident that each row contains exactly one entry for each $w_i$. To see that each column also contains exactly one entry $w_i$ with $i = \{1,\ldots,n\}$, consider the column indexed by unknown $b_{S'}$ where $S' \subseteq [n]$ with odd cardinality. Then, for each $j \in S'$, the row indexed by $S = S' \setminus j$ contains $w_j$. Additionally, for each $j \notin S'$, the row indexed by $S = S' \cup j$ also contains $w_j$. Thus, each row and column contains exactly one entry $w_i$ for $i = \{1,\ldots,n\}$. \hfill $\Box$	
\vspace{10pt}

As an example of Prop. \ref{prop_pm}.\ref{prop_pm_lbl}, note that row $\{12\}$ is paired with column $\{125\}$, and column $\{1\}$ is paired with row $\{15\}$ in Ex. \ref{ex_pm_5_ord}.

\noindent \textit{Proof of Prop. \ref{prop_pm}.\ref{prop_pm_lbl}}: To prove this claim, suppose that we have the ``order tree" $T_{n-1}$ for the subsets of $[n-1]$. In order to create the order tree $T_n$ for the subsets of $[n]$, we first copy $T_{n-1}$ and add the integer $n$ to each set, creating the tree $T_{n-1}\cup n$. We then join the node in $T_{n-1}\cup n$ indexed by $\{1\cdots n\}$ to the node in $T_{n-1}$ indexed $\{1\cdots (n-1)\}$. The resulting tree is the order tree for $T_n$. For example,

\vspace{-12pt}
\hspace{35pt}\begin{minipage}{0.21\linewidth}
\begin{center}
\includegraphics[page=1,scale=0.19, trim=0 0 25 0]{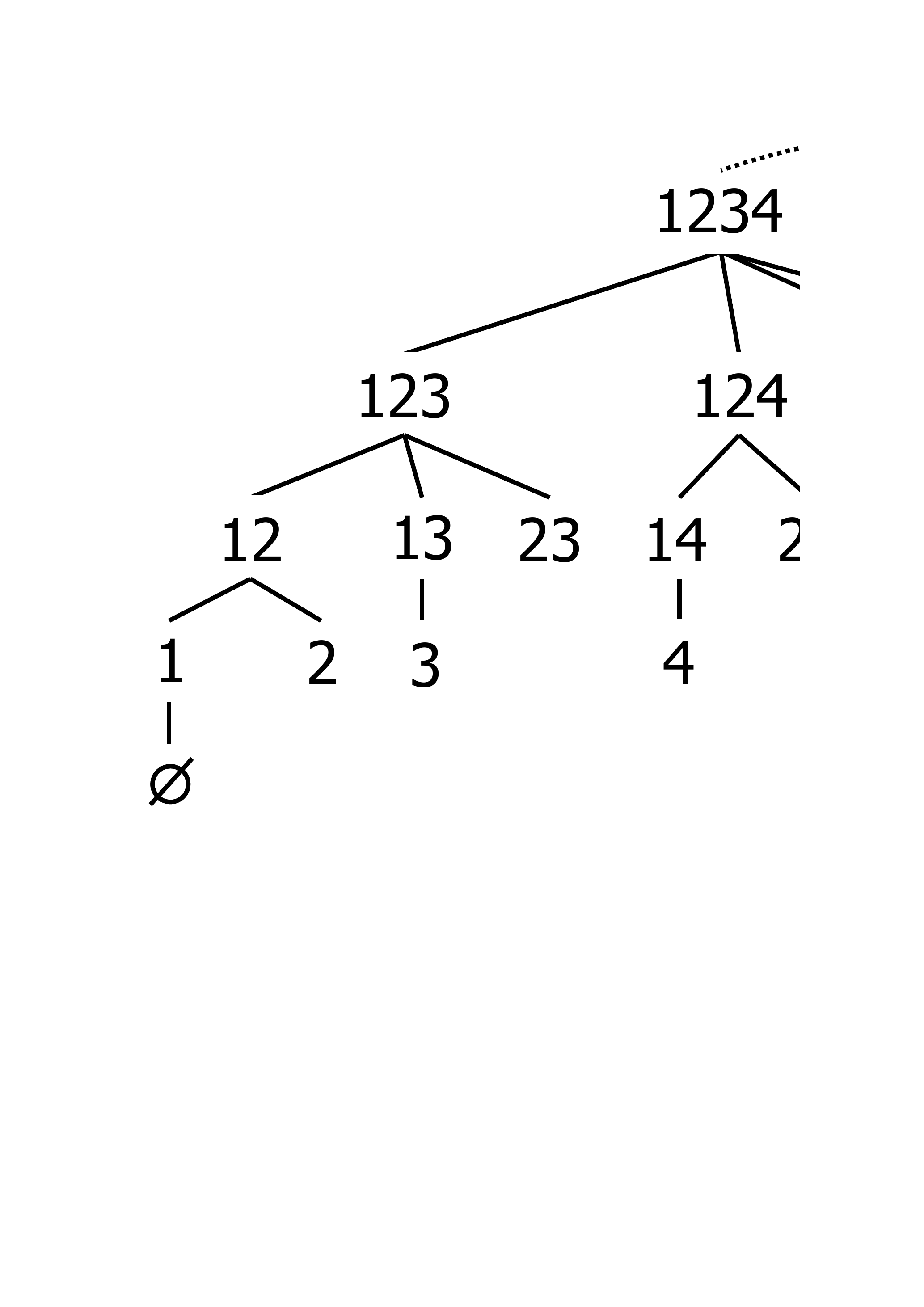}
\end{center}
\end{minipage}
\begin{minipage}{0.205\linewidth}
\begin{center}
\includegraphics[page=2,scale=0.19, trim=0 0 25 0]{order_contract2.pdf}
\end{center}
\end{minipage}
\begin{minipage}{0.20\linewidth}
\begin{center}
\includegraphics[page=3,scale=0.19, trim=0 0 25 0]{order_contract2.pdf}
\end{center}
\end{minipage}
\begin{minipage}{0.28\linewidth}
\begin{center}
\includegraphics[page=4,scale=0.19, trim=0 0 25 0]{order_contract2.pdf}
\end{center}
\end{minipage}
\vspace{-60pt}

Since no set in $T_{n-1}$ contains the integer $n$ and every set in $T_{n-1} \cup n$ contains $n$, it is easy to see that the even and odd sets are paired by inspecting how $T_{n-1}$ overlays on top of $T_{n-1}\cup n$. Thus, the claim holds. \hfill $\Box$

\noindent \textit{Proof of Prop. \ref{prop_pm}.\ref{prop_pm_diag}}: This result follows from the equations defining the partition matrix, and also Prop. \ref{prop_pm}.\ref{prop_pm_lbl}, which defines the row-column pairing of the diagonal element. \hfill $\Box$


\noindent \textit{Proof of Prop. \ref{prop_pm}.\ref{prop_pm_sym}}: Consider an arbitrary row $i$ indexed by a set $S_i$, and let column $i$ be indexed by the set $b_i$. In order to prove symmetry, we must show that row $i$ is equal to column $i$. By Prop. \ref{prop_pm}.\ref{prop_pm_lbl}, $n$ is either in $S_i$ or $b_i$, but not both. Without loss of generality, assume $n \in S_i$ and $b_i = S_i \setminus n$ (e.g. $S_i = \{15\}$ and $b_i = \{1\}$).  Suppose $\big(\Pi(W)\big)_{ij} = w_{k_j}$ for $j < i$. We must show that $\big(\Pi(W)\big)_{ji} = w_{k_j}$. Since $j < i$, $k_j \notin S_i$, and column $j$ is indexed by $b_j = S_i \cup k_j$ (e.g. in row $\{15\}$, $w_2$ appears in column $\{125\}$). Since $n \in (S_i \cup k_j)$, row $j$ is indexed by $S_j = (S_i \cup k_j) \setminus n$ (e.g. $S_j = \{12\}$). Then, $S_j \setminus k_j = b_i$, and $\big(\Pi(W)\big)_{ji}= w_{k_j}$. 

Suppose $i < j$, and $\big(\Pi(W)\big)_{ij}$ is again equal to $w_{k_j}$. Then $k_j \in S_i$, and column $j$ is indexed by $b_j = S_i \setminus k_j$ and row $j$ is indexed by $S_j = (S_i \setminus k_j) \setminus n$ (e.g., in row $\{15\}$, $w_1$ appears in column $\{1\}$). But then $S_j \cup k_j = b_i$, and $\big(\Pi(W)\big)_{ji} = w_{k_j}$.

A similar argument holds if $n \notin S_i$, but with the logic reversed. Since we have shown that $\big(\Pi(W)\big)_{ij} =\big(\Pi(W)\big)_{ji}$. we have shown that the matrix is symmetric. \hfill $\Box$

\subsection{The Partition Matrix as a Sum of Permutation Matrices} \label{ssec_pm_pi} We will now express the partition matrix in terms of a specific set of permutation matrices, and then prove a series of properties about these particular permutation matrices. Recall that the \emph{symmetric difference} of two sets $A$ and $B$ (denoted as $A \Delta B$) is the set of elements which are in either set $A$ or $B$ but not in their intersection. For example, the symmetric difference $\{1,2,3\} \Delta \{3,4\} = \{1,2,4\}$. By Prop. \ref{prop_pm}.\ref{prop_pm_sym}, every $w_k$ appears exactly once in each row and column. Therefore, we can express $\Pi(W)$  as follows.
\begin{defi} \label{def_pi_k} Given $W = \{w_1,\ldots,w_n\}$ and the corresponding partition matrix $\Pi(W)$, let $\Pi_1,\ldots, \Pi_n \in \{0,1\}^{2^{n-1} \times 2^{n-1}}$ be permutation matrices such that
\begin{align*}
(\Pi_k)_{ij} &= \begin{cases}
1 & \text{if $\big(\Pi(W)\big)_{ij} = w_k$}~,\\
0 & \text{otherwise}~.
\end{cases}~, \quad \quad \text{where $1 \leq i,j \leq 2^{n-1}$.}
\end{align*}
\end{defi}
By this definition, it is clear that
\begin{align*}
\Pi(W) &= \sum_{k=1}^n w_k \Pi_k~.
\end{align*}

\begin{example} \label{ex_pm_pi_3} \emph{
Let $W = \{w_1,\ldots,w_4\}$. Here we display the $8 \times 8$ partition matrix $\Pi(W)$, and the particular permutation matrix $\Pi_3$. For convenience, we highlight the $w_3$ entries appearing in $\Pi(W)$.
\small{\[
\Pi(W) = \begin{array}{c|cccccccc}
 & {123} & {124} & {134} & {234} & {1} & {2} & {3} & {4}\\
\hline \hline
\vspace{-5pt}
{1234} & w_4 & \mathbf{w_3} & w_2 & w_1 & 0 & 0 & 0 & 0\\ 
\vspace{-5pt}
{12} & \mathbf{w_3} & w_4 & 0 & 0 & w_2 & w_1 & 0 & 0\\
\vspace{-5pt}
{13} & w_2 & 0 & w_4 & 0 &\mathbf{w_3} & 0 & w_1 & 0\\
\vspace{-5pt}
{23} & w_1 & 0 & 0 & w_4 & 0 & \mathbf{w_3} & w_2 & 0\\
\vspace{-5pt}
{14} & 0 & w_2 & \mathbf{w_3} & 0 & w_4 & 0 & 0 & w_1\\
\vspace{-5pt}
{24} & 0 & w_1 & 0 & \mathbf{w_3} & 0 & w_4 & 0 & w_2\\
\vspace{-5pt}
{34} & 0 & 0 & w_1 & w_2 & 0 & 0 & w_4 & \mathbf{w_3}\\
\vspace{-5pt}
{\emptyset} & 0 & 0 & 0 & 0 & w_1 & w_2 & \mathbf{w_3} & w_4
\end{array}~, \quad \quad \Pi_{\mathbf{3}} = \begin{array}{c|cccccccc}
 & {123} & {124} & {134} & {234} & {1} & {2} & {3} & {4}\\
\hline \hline
\vspace{-5pt}
{1234} & 0 &\mathbf{1} &0 & 0 & 0 & 0 & 0 & 0\\ 
\vspace{-5pt}
{12} & \mathbf{1} & 0 & 0 & 0 & 0 & 0 & 0 & 0\\
\vspace{-5pt}
{13} & 0 & 0 & 0 & 0 & \mathbf{1} & 0 & 0 & 0\\
\vspace{-5pt}
{23} & 0 & 0 & 0 &0 & 0 &\mathbf{1} & 0 & 0\\
\vspace{-5pt}
{14} & 0 & 0 &\mathbf{1} & 0 &0 & 0 & 0 &0\\
\vspace{-5pt}
{24} & 0 & 0 & 0 & \mathbf{1} & 0 &0 & 0 &0\\
\vspace{-5pt}
{34} & 0 & 0 &0 &0 & 0 & 0 &0 &\mathbf{1}\\
\vspace{-5pt}
{\emptyset} & 0 & 0 & 0 & 0 &0 & 0 &\mathbf{1} & 0
\end{array}\]   \vspace{-15pt} \hfill $\Box$ }\normalsize} 
\end{example}
\vspace{5pt}
By Prop. \ref{prop_pm}.\ref{prop_pm_sym}, the matrix $\Pi(W)$ is symmetric. Therefore, each of the permutation matrices $\Pi_1,\ldots,\Pi_n$ is likewise symmetric, and the following proposition holds. 
\begin{prop} \label{prop_pi} Given $W = \{w_1,\ldots,w_n\}$ and the corresponding permutation matrices $\Pi_1,\ldots, \Pi_n$, each of the following holds:
\begin{enumerate}
\item \label{prop_pi_n} $\Pi_n$ is the identity matrix~,
\item \label{prop_pi_inv} For $k=1,\ldots,n$, $\Pi_k^2=I$ (the matrices are \emph{involutory})~,
\item \label{prop_pi_eigen} For $k=1,\ldots,n-1$, $\Pi_k$ has $\pm 1$ eigenvalues, and $\Pi_n$ has all $+1$ eigenvalues.
\item \label{prop_pi_diag} For $k=1,\ldots,n$, $\Pi_k$ is diagonalizable, and
\item \label{prop_pi_com} $\Pi_k\Pi_l = \Pi_l\Pi_k$ (the permutation matrices $\Pi_k$ pairwise commute)~.
\end{enumerate}
\end{prop}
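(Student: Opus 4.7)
The plan is to handle parts (1)--(4) as quick consequences of the previously established structural properties of $\Pi(W)$, and then prove the commutativity claim (5) by giving an explicit closed-form description of each $\Pi_k$ as a permutation of the even subsets of $[n]$.

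First I would dispatch (1) using Prop. \ref{prop_pm}.\ref{prop_pm_diag}: since every diagonal entry of $\Pi(W)$ equals $w_n$, Def. \ref{def_pi_k} places the ones of $\Pi_n$ exactly on the diagonal. For (2), I combine the symmetry of $\Pi(W)$ (Prop. \ref{prop_pm}.\ref{prop_pm_sym}) with the fact that each $w_k$ occurs exactly once per row and per column (Prop. \ref{prop_pm}.\ref{prop_pm_once}); this forces each $\Pi_k$ itself to be a symmetric permutation matrix, and for any such matrix $\Pi_k^2 = \Pi_k^T \Pi_k = I$. Parts (3) and (4) then follow immediately: $\Pi_k^2 = I$ means the minimal polynomial of $\Pi_k$ divides $(x-1)(x+1)$, which is squarefree with roots $\pm 1$, so $\Pi_k$ is diagonalizable with eigenvalues in $\{-1,+1\}$; when $k=n$ only $+1$ can appear because $\Pi_n = I$.

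The main work is in (5). I would first observe from Def. \ref{def_pm} that the $(S,T)$-entry of $\Pi(W)$, where $S$ is even and $T$ is odd, equals $w_k$ precisely when the symmetric difference satisfies $S\,\Delta\,T = \{k\}$. Prop. \ref{prop_pm}.\ref{prop_pm_lbl} tells me that the column sharing the position of row $S$ is labeled by the odd set $T = S\,\Delta\,\{n\}$, so I may re-index columns by even subsets via $T \mapsto T\,\Delta\,\{n\}$. Under this identification, $(\Pi_k)_{S,S'} = 1$ iff $S\,\Delta\,(S'\,\Delta\,\{n\}) = \{k\}$, i.e., iff $S' = S\,\Delta\,(\{k\}\,\Delta\,\{n\})$. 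Thus $\Pi_k$ is realized as the involution $\pi_k(S) = S\,\Delta\,\{k,n\}$ on even subsets for $k\ne n$, and $\pi_n$ is the identity.

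Given this description, commutativity is transparent because symmetric differences live in the abelian group $(\mathbb{Z}/2\mathbb{Z})^n$: for any $k,l\in\{1,\dots,n\}$ and any even $S$,
\[
\pi_k\pi_l(S) = S\,\Delta\,\{l,n\}\,\Delta\,\{k,n\} = S\,\Delta\,\{k,l\} = \pi_l\pi_k(S),
\]
with the convention $\{k,l\}=\emptyset$ when $k=l$; the cases $k=n$ or $l=n$ are trivial since $\pi_n$ is the identity. The main obstacle is really the bookkeeping in (5): one has to carefully align the row and column indexings via $\Delta\,\{n\}$ before the clean ``symmetric difference with $\{k,n\}$'' description appears. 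Once that alignment is made, everything collapses to elementary algebra in $\mathbb{F}_2$-vector spaces, and (5) follows without further effort.
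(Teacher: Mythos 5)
Your proposal is correct. Parts (1)--(4) follow the same route as the paper: (1) from Prop.~\ref{prop_pm}.\ref{prop_pm_diag}, (2) from symmetry of each $\Pi_k$ (inherited from Prop.~\ref{prop_pm}.\ref{prop_pm_sym}) together with $\Pi_k\Pi_k^T=I$, and (3)--(4) from the involution property; your version is in fact slightly tidier, since arguing that the minimal polynomial \emph{divides} $(x-1)(x+1)$ handles $\Pi_n=I$ uniformly and gives diagonalizability directly from squarefreeness, where the paper asserts the minimal polynomial \emph{is} $x^2-1$ and cites Hoffman--Kunze for diagonalizability. For part (5) the underlying mechanism is identical --- commutativity of symmetric difference --- but your packaging is genuinely different and cleaner: the paper chases entries, showing $\big(\Pi_{k_1}\Pi_{k_2}\big)_{ij}=1$ iff $S_j\,\Delta\,k_2\,\Delta\,n=S_i\,\Delta\,k_1$ and then swapping $k_1,k_2$, whereas you re-index columns once via $T\mapsto T\,\Delta\,\{n\}$ (justified by Prop.~\ref{prop_pm}.\ref{prop_pm_lbl}) so that each $\Pi_k$ becomes the translation $S\mapsto S\,\Delta\,\{k,n\}$ on even subsets, after which commutativity is immediate in $(\mathbb{Z}/2\mathbb{Z})^n$. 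What your version buys is that it makes the group structure explicit --- the $\Pi_k$ generate an elementary abelian $2$-group acting regularly on the even subsets --- which is exactly the observation the paper defers to the representation-theoretic remark following Thm.~\ref{thm_pm_det}; the one bookkeeping point worth stating explicitly is that $|\{k,n\}|=2$ guarantees the translation preserves the parity of $|S|$, so it really is a permutation of the row-index set.
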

\begin{example} \label{ex_pm_5_ord_com} \emph{
Given $W = \{w_1,\ldots,w_5\}$, here is the $16 \times 16$ partition matrix $\Pi(W)$. We will show that $\big(\Pi_1\Pi_3\big)_{(\{23\}, \{125\})} = 1 = \big(\Pi_3\Pi_1\big)_{(\{23\}, \{125\})}$~.
\small{\[
\begin{array}{c|cccccccccccccccc}
 & {12345} & {123} & {124} & {134} & {234} & {125} & {135} & {235} & {145} & {245} & {345} & {1} & {2} & {3} & {4} & {5}\\
\hline \hline
\vspace{-5pt}
{1234} & w_5 & w_4 & w_3 & w_2 & w_1 & 0 & 0 & 0 & 0 & 0 & 0 & 0 & 0 & 0 & 0 & 0\\ 
\vspace{-5pt}
{1235} & w_4 & w_5 & 0 & 0 & 0 & \mathbf{w_3} & w_2 & w_1 & 0 & 0 & 0 & 0 & 0 & 0 & 0 & 0\\ 
\vspace{-5pt}
{1245} & w_3 & 0 & w_5 & 0 & 0 & w_4 & 0 & 0 & w_2 & w_1 & 0 & 0 & 0 & 0 & 0 & 0\\ 
\vspace{-5pt}
{1345} & w_2 & 0 & 0 & w_5 & 0 & 0 & w_4 & 0 & w_3 & 0 & w_1 & 0 & 0 & 0 & 0 & 0\\ 
\vspace{-5pt}
{2345} & w_1 & 0 & 0 & 0 & w_5 & 0 & 0 & w_4 & 0 & w_3 & w_2 & 0 & 0 & 0 & 0 & 0\\ 
\vspace{-5pt}
{12} & 0 & w_3 & w_4 & 0 & 0 & w_5 & 0 & 0 & 0 & 0 & 0 & w_2 & w_1 & 0 & 0 & 0\\ 
\vspace{-5pt}
{13} & 0 & w_2 & 0 & w_4 & 0 & 0 & w_5 & 0 & 0 & 0 & 0 & w_3 & 0 & w_1 & 0 & 0\\ 
\vspace{-5pt}
{23} & 0 & \mathbf{w_1} & 0 & 0 & w_4 & 0 & 0 & w_5 & 0 & 0 & 0 & 0 & \mathbf{w_3} & w_2 & 0 & 0\\ 
\vspace{-5pt}
{14} & 0 & 0 & w_2 & w_3 & 0 & 0 & 0 & 0 & w_5 & 0 & 0 & w_4 & 0 & 0 & w_1 & 0\\ 
\vspace{-5pt}
{24} & 0 & 0 & w_1 & 0 & w_3 & 0 & 0 & 0 & 0 & w_5 & 0 & 0 & w_4 & 0 & w_2 & 0\\ 
\vspace{-5pt}
{34} & 0 & 0 & 0 & w_1 & w_2 & 0 & 0 & 0 & 0 & 0 & w_5 & 0 & 0 & w_4 & w_3 & 0\\ 
\vspace{-5pt}
{15} & 0 & 0 & 0 & 0 & 0 & w_2 & w_3 & 0 & w_4 & 0 & 0 & w_5 & 0 & 0 & 0 & w_1\\ 
\vspace{-5pt}
{25} & 0 & 0 & 0 & 0 & 0 & \mathbf{w_1} & 0 & w_3 & 0 & w_4 & 0 & 0 & w_5 & 0 & 0 & w_2\\ 
\vspace{-5pt}
{35} & 0 & 0 & 0 & 0 & 0 & 0 & w_1 & w_2 & 0 & 0 & w_4 & 0 & 0 & w_5 & 0 & w_3\\ 
\vspace{-5pt}
{45} & 0 & 0 & 0 & 0 & 0 & 0 & 0 & 0 & w_1 & w_2 & w_3 & 0 & 0 & 0 & w_5 & w_4\\ 
\vspace{-5pt}
{\emptyset} & 0 & 0 & 0 & 0 & 0 & 0 & 0 & 0 & 0 & 0 & 0 & w_1 & w_2 & w_3 & w_4 & w_5 
\end{array}
\]}\normalsize} 
Observe that $\Pi_1\big(\{23\}, \{123\}\big) = 1 = \Pi_3\big(\{1235\}, \{125\}\big)$. Furthermore, observe that $\big(\{1235\}, \{123\}\big)$ indexes a diagonal element since $\{1235\} \Delta \{5\} = \{123\}$ (by Prop. \ref{prop_pm}.\ref{prop_pm_lbl}). For the commuted multiplication $\Pi_3\Pi_1$, observe that $\Pi_3\big(\{23\}, \{2\}\big) = 1 = \Pi_1\big(\{25\}, \{125\}
\big)$. Furthermore, observe that $\big(\{25\}, \{2\}\big)$ indexes a diagonal element since $\{25\} \Delta \{5\} = \{2\}$. This is the technique used to prove Prop. \ref{prop_pi}.\ref{prop_pi_com}. $\Box$
\end{example}
\begin{proof} Prop. \ref{prop_pi}.\ref{prop_pi_n} comes directly from the definition of the permutation matrices and Prop. \ref{prop_pm}.\ref{prop_pm_diag}. To see that the matrices are involutory (Prop. \ref{prop_pi}.\ref{prop_pi_inv}), recall that for any permutation matrix $P$, $PP^T = I$. In this case, since the matrices are symmetric, $\Pi_k = \Pi_k^T$, and thus $\Pi_k^2=I$ follows. For Prop. \ref{prop_pi}.\ref{prop_pi_eigen}, since the matrices $\Pi_k$ are involutory, the minimal polynomial is $x^2 - 1$, and because the eigenvalues are roots of the minimal polynomial, the eigenvalues are clearly $\pm 1$.  Since $\Pi_n = I$, the eigenvalues of $\Pi_n$ all clearly $+1$. For Prop. \ref{prop_pi}.\ref{prop_pi_diag}, we observe that this is proven in \cite{hoffman_kunze}, Theorem 6, pg. 204.

Finally, we must show that the matrices pairwise commute. We will show that, given $k_1,k_2 \in \{1,\ldots,n\}$,  $\Pi_{k_1}\Pi_{k_2} = \Pi_{k_2}\Pi_{k_1}$. Since every row/column of $\Pi_k$ has exactly one non-zero entry, $\big(\Pi_{k_1}\Pi_{k_2}\big)_{ij} = 1$ only when the non-zero entries are located in the same row/column, respectively.

In particular, let $S_i$ index the $i$-th row of $\Pi(W)$, and let $S_j$ index the $j$-th column of $\Pi(W)$. For shorthand, we will denote $\big(\Pi_{k} \big)_{S_i,S_j}$ as $\big(\Pi_{k} \big)_{ij}$ with $1 \leq i,j \leq 2^{n-1}$. Then, the non-zero entries in row $S_i$ of $\Pi_{k_1}$ and column $S_j$ in $\Pi_{k_2}$ can be expressed as $\Pi_{k_1}
\big(S_i, S_i \Delta k_1 \big) = 1= \Pi_{k_2}\big(S_j \Delta k_2, S_j \big)$ (relevant to the product $\Pi_{k_1}\Pi_{k_2}$), and the non-zero entries in row $S_i$ of $\Pi_{k_2}$ and column $S_j$ in $\Pi_{k_1}$ can be expressed as $\Pi_{k_2}
\big(S_i, S_i \Delta k_2 \big) = 1 = \Pi_{k_1}\big(S_j \Delta k_1, S_j \big)$ (relevant to the commuted product $\Pi_{k_2}\Pi_{k_1}$). This can be seen by recalling the definitions of both the partition matrix and permutation matrices. We observe that $\big(\Pi_{k_1}\Pi_{k_2}\big)_{ij} = 1$ if and only if $\{S_j \Delta k_2, S_i \Delta k_1\}$ indexes a diagonal entry, and $\big(\Pi_{k_2}\Pi_{k_1}\big)_{ij} = 1$ if and only if $\{S_j \Delta k_1, S_i \Delta k_2\}$ indexes a diagonal entry. Therefore, in order to show $\big(\Pi_{k_1}\Pi_{k_2}\big)_{ij} = \big(\Pi_{k_2}\Pi_{k_1}\big)_{ij}$, we simply observe that if $\{S_j \Delta k_2, S_i \Delta k_1\}$ indexes a diagonal entry, then $S_j \Delta k_2 \Delta n =  S_i \Delta k_1$ (by Prop. \ref{prop_pm}.\ref{prop_pm_lbl}). However, if $S_j \Delta k_2 \Delta n =  S_i \Delta k_1$, then $S_j \Delta k_1 \Delta n = S_i \Delta k_2$, by the definition of the symmetric difference. Therefore, $\big(\Pi_{k_1}\Pi_{k_2}\big)_{ij} = \big(\Pi_{k_2}\Pi_{k_1}\big)_{ij}$, and the matrices pairwise commute.
\end{proof}

We pause to observe that the set of matrices $\{\Pi_1,\ldots,\Pi_n\}$ has now been shown to be a set of commuting, diagonalizable matrices. Recall that a set of matrices is \emph{simultaneously diagonalizable} if there exists a single invertible matrix $P$ such that $P^{-1}AP$ is a diagonal matrix for every $A$ in the set. This allows us to recall the following well-known fact:
\begin{prop} [\cite{horn_john}, pg. 64] \label{prop_sim_diag} A set (possibly infinite) of diagonalizable matrices is commuting if and only if it is simultaneously diagonalizable. 
%
\end{prop}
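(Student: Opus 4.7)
The plan is to prove the two directions separately. The forward direction is essentially free: if $P^{-1}AP$ is diagonal for every $A$ in the family $\mathcal{F}$, then since any two diagonal matrices commute, conjugating back yields $AB = BA$ for all $A,B \in \mathcal{F}$. This disposes of the ``if'' direction in a single line.

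For the converse, I would induct on the dimension $n$ of the ambient space. The base case $n=1$ is trivial. For the inductive step, if every matrix in $\mathcal{F}$ is a scalar multiple of the identity, any basis simultaneously diagonalizes the family. Otherwise, fix a non-scalar $A \in \mathcal{F}$ and use its diagonalizability to decompose the ambient space as $V = \bigoplus_\lambda V_\lambda$, a direct sum of eigenspaces of $A$, each of dimension strictly less than $n$. The crucial observation is that every $B \in \mathcal{F}$ preserves each $V_\lambda$: if $v \in V_\lambda$ then $A(Bv) = B(Av) = \lambda(Bv)$, so $Bv \in V_\lambda$. Thus the restricted family $\{B|_{V_\lambda} : B \in \mathcal{F}\}$ acts on a lower-dimensional space, and I apply the inductive hypothesis to each $V_\lambda$ to produce a simultaneously diagonalizing basis of that subspace. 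Concatenating these bases across all eigenvalues $\lambda$ yields the desired simultaneously diagonalizing basis of $V$.

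The main obstacle, and the step I would treat most carefully, is verifying that each restriction $B|_{V_\lambda}$ remains diagonalizable so that the inductive hypothesis applies. This follows from the standard characterization that a linear operator is diagonalizable if and only if its minimal polynomial splits into distinct linear factors: the minimal polynomial of $B|_{V_\lambda}$ divides that of $B$, and hence inherits this property. I would also note that the hypothesis ``possibly infinite'' causes no difficulty, because the inductive step only ever singles out one matrix $A$ to split the space, leaving the remainder of the family, whatever its cardinality, to be handled on strictly smaller invariant subspaces where commutativity and diagonalizability are preserved.
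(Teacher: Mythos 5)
Your proof is correct. The paper itself gives no proof of this proposition --- it is quoted as a known result with a citation to Horn and Johnson --- and your argument (trivial forward direction; induction on dimension via the eigenspace decomposition of a non-scalar member, invariance of eigenspaces under commuting operators, and diagonalizability of restrictions via the minimal-polynomial criterion) is precisely the standard proof found in that reference, with the key subtleties (restrictions stay diagonalizable, infinite families cause no trouble) handled correctly.
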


Having gathered together a series of facts about the partition matrix and the associated permutation matrices, we now investigate the determinant of the partition matrix.

\section{\large{The Partition Matrix and Partition Polynomial}} \label{sec_det} Given a square non-singular matrix $A$, Cramer's rule states that $Ax = b$ is solved by
\begin{align*}
x_i &= \frac{\det(A|^i_b)}{\det(A)}~,
\end{align*}
where $A|^i_b$ is the matrix $A$ with the $i$-th column replaced with the right-hand side vector $b$.  In Section \ref{sec_pm}, we extracted a $2^{n-1} \times 2^{n-1}$ square linear system from the general linear system constructed via the minimum-degree Nullstellensatz certificate described by Thm. \ref{thm_min_cert}. Here, we see by Cramer's rule that the unknowns within that certificate are ratios of two determinants. In this section, we show that the determinant of the partition matrix is equivalent to a brute-force iteration over all the partitions of $W$. Therefore, the denominator of any unknown in the certificate is a combinatorial representation of the partition problem.

We observe that, in general, the linear system $Ax=b$ may have a solution even if $\det(A)=0$. However, in the case of the partition matrix, when we demonstrate that the $\det(A)$ is equal to the \emph{partition polynomial}, we will be demonstrating that $Ax=b$ only has a solution in the case when $\det(A)\neq 0$. 

Let $\{-1,1\}^n$ be the set of all $\pm 1$ bit strings of length $n$. For $S \in \{-1,1\}^n$, let $s_i$ denote the $i$-th bit in the string $S$.
\begin{defi} \label{defi_part_poly} \emph{Given a set $W = \{w_1,\ldots,w_n\}$, let
\begin{align*}
\prod_{S \in \{-1,1\}^{n -1}} \Bigg( \bigg(\sum_{i = 1}^{n-1} {s_i}w_i \bigg) + w_n\Bigg)
\end{align*}
be the \emph{partition polynomial} of $W$.}
\end{defi}

For example, let $n=5$, and $S \in \{-1,1\}^4$ be $S = ``\text{-1,1,-1,-1}"$. Then, $S$ corresponds to the $-w_1 +w_2 -w_3 - w_4$, and denotes a partition of $W = \{w_1,\ldots,w_5\}$, with $w_5$ fixed on the ``positive" side of the partition, and the other $w_i$ sorted according to sign.
\vspace{-7pt}
\[\small{
\begin{array}{c|c}
- & +\\[-3pt] \hline
w_1 & w_5\\[-5pt]
w_3 & w_2\\[-5pt]
w_4
\end{array}}\normalsize
\]
If this arrangement of $w_i$ is a partition of $W$, then $-w_1 +w_2 -w_3 - w_4\mathbf{ + w_5} = 0$. In this way, \emph{any} bitstring $S \in \{-1,1\}^{n-1}$ is equivalent to fixing $w_n$ on the ``positive" side of the partition, and then arranging the other $w_i$ on the ``positive/negative" side, according to sign. In this way, the partition polynomial represents an iteration over \emph{every possible partition of $W$}, avoiding double-counting by permanently fixing $w_n$ on the ``positive" side. If the set $W$ is partitionable, one of bitstrings $S$ will define a factor of the partition polynomial that sums to zero. We will show that the determinant of the partition matrix is the partition polynomial: therefore, if the determinant of the partition matrix is zero, the linear system has \emph{no} solution, and there is \emph{no} Nullstellensatz certificate. 
\begin{example} \label{ex_b4} \emph{ In Ex. \ref{ex_cert}, we presented an actual minimum-degree certificate for the non-partitionable set $W = \{1, 3, 5, 2\}$~. We observe that 
\begin{align*}
-51975 &= (1+3+5+2)(-1+3+5+2)(1-3+5+2)(1+3-5+2)\\
&\phantom{=}(-1-3+5+2)(-1+3-5+2)(1-3-5+2)(-1-3-5+2)~.
\end{align*}
Via Cramer's rule, we see that the unknown $b_4$ is equal to
\begin{align*}
b_4 &= \frac{-2550}{-51975} = \frac{34}{693}~,
\end{align*}
which is indeed the value of unknown $b_4$ as it appears in the certificate.} \hfill $\Box$
\end{example}
\begin{example} \label{ex_part_poly} \emph{Here is the determinant of the $8 \times 8$ partition matrix $\text{Part}_4$:}

\begin{minipage}{0.5\linewidth}
\[
\det\left(\left[ \begin{array}{cccccccc}
\vspace{-5pt}
w_4 & w_3 & w_2 & w_1 & 0 & 0 & 0 & 0\\ 
\vspace{-5pt}
w_3 & w_4 & 0 & 0 & w_2 & w_1 & 0 & 0\\
\vspace{-5pt}
w_2 & 0 & w_4 & 0 & w_3 & 0 & w_1 & 0\\
\vspace{-5pt}
w_1 & 0 & 0 & w_4 & 0 & w_3 & w_2 & 0\\
\vspace{-5pt}
0 & w_2 & w_3 & 0 & w_4 & 0 & 0 & w_1\\
\vspace{-5pt}
0 & w_1 & 0 & w_3 & 0 & w_4 & 0 & w_2\\
\vspace{-5pt}
0 & 0 & w_1 & w_2 & 0 & 0 & w_4 & w_3\\
\vspace{-5pt}
0 & 0 & 0 & 0 & w_1 & w_2 & w_3 & w_4\\
\vspace{-10pt}
\phantom{=}
\end{array} \right]\right)=~
\]
\end{minipage}
\hspace{-17pt}\begin{minipage}{0.5\linewidth}
\footnotesize{
\begin{align*}
&\phantom{=}(w_1+w_2+w_3+w_4)(-w_1+w_2+w_3+w_4)\\
&\phantom{=}(w_1-w_2+w_3+w_4)(w_1+w_2-w_3+w_4)\\
&\phantom{=}(-w_1+w_2-w_3+w_4)(-w_1-w_2+w_3+w_4)\\
&\phantom{=}(w_1-w_2-w_3+w_4)(-w_1-w_2-w_3+w_4)~.
\end{align*}}\normalsize
\end{minipage}
\end{example}
\begin{thm} \label{thm_pm_det} Given $W = \{w_1,\ldots,w_n\}$, the determinant of the partition matrix of $W$ is the partition polynomial of $W$. 
\end{thm}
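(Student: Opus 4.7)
The plan is to exploit the decomposition $\Pi(W) = \sum_{k=1}^n w_k \Pi_k$ from Def.~\ref{def_pi_k}. By Prop.~\ref{prop_pi} the $\Pi_k$ are pairwise commuting and diagonalizable, so Prop.~\ref{prop_sim_diag} yields a single invertible $P$ with $P^{-1}\Pi_k P$ diagonal for every $k$. Then $P^{-1}\Pi(W)P$ is diagonal, with $i$-th entry $\sum_{k=1}^n w_k \lambda_{i,k}$, where $\lambda_{i,k} \in \{-1,+1\}$ for $k<n$ by Prop.~\ref{prop_pi}.\ref{prop_pi_eigen} and $\lambda_{i,n}=1$ by Prop.~\ref{prop_pi}.\ref{prop_pi_n}. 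Hence
\begin{align*}
\det \Pi(W) \;=\; \prod_{i=1}^{2^{n-1}} \Big( \sum_{k=1}^{n-1} \lambda_{i,k}\, w_k + w_n \Big),
\end{align*}
so, comparing with Def.~\ref{defi_part_poly}, the theorem reduces to showing that the map $i \mapsto (\lambda_{i,1},\ldots,\lambda_{i,n-1})$ is a \emph{bijection} between diagonal positions and sign patterns in $\{-1,+1\}^{n-1}$.

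To pin down the joint eigenvalue structure, I would reinterpret each $\Pi_k$ combinatorially. Using Prop.~\ref{prop_pm}.\ref{prop_pm_lbl} to identify each odd column-label $T$ with the even subset $T \Delta \{n\}$, the recipe in Def.~\ref{def_pm} makes $\Pi_k$ the permutation matrix of the involution $S \mapsto S \Delta \{k,n\}$ acting on the $2^{n-1}$ even subsets of $[n]$ (and for $k=n$ this is the identity, matching Prop.~\ref{prop_pi}.\ref{prop_pi_n}). Under $\Delta$, the even subsets of $[n]$ form a group $G \cong (\mathbb{Z}/2)^{n-1}$, with $\{1,n\},\ldots,\{n-1,n\}$ as an independent generating set: a symmetric-difference relation among them with an odd number of terms would leave $n$ behind, while with an even number of terms it would reduce to a vanishing symmetric difference of distinct singletons in $[n-1]$, forcing the relation to be trivial. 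Hence $\Pi_1,\ldots,\Pi_{n-1}$ realize $G$ as the left regular representation on the basis indexed by $G$.

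The regular representation of a finite abelian group decomposes as the direct sum of all its characters, each with multiplicity one. The $2^{n-1}$ characters of $(\mathbb{Z}/2)^{n-1}$ are the maps $\chi_\epsilon$, indexed by $\epsilon \in \{-1,+1\}^{n-1}$, with $\chi_\epsilon(\{k,n\}) = \epsilon_k$. Therefore, after simultaneous diagonalization, each sign pattern $\epsilon$ appears as the joint eigenvalue vector of $(\Pi_1,\ldots,\Pi_{n-1})$ at exactly one diagonal position, giving an entry of $P^{-1}\Pi(W)P$ equal to $\sum_{k=1}^{n-1} \epsilon_k w_k + w_n$. Taking the product over all $\epsilon$ yields the partition polynomial of $W$, as required.

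The main obstacle is establishing this bijection cleanly. The argument above relies on the representation-theoretic fact that the regular representation of an abelian group is multiplicity-free over its character group. A more hands-on alternative is to compute $\operatorname{tr}(\Pi_{k_1}\cdots \Pi_{k_r})$ directly from the $S \mapsto S \Delta \{k,n\}$ description — it vanishes unless $\{k_1,n\}\Delta\cdots\Delta\{k_r,n\} = \emptyset$ (in which case the permutation is the identity and the trace equals $2^{n-1}$) — and then invoke orthogonality of characters to conclude that the multiset of joint eigenvalue vectors is exactly $\{-1,+1\}^{n-1}$ with each pattern appearing once.
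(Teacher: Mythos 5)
Your first half --- decomposing $\Pi(W)=\sum_k w_k\Pi_k$, simultaneously diagonalizing via Props.~\ref{prop_pi} and~\ref{prop_sim_diag}, and reducing the theorem to the claim that the joint eigenvalue vectors of $(\Pi_1,\ldots,\Pi_{n-1})$ run over $\{-1,+1\}^{n-1}$ exactly once --- coincides with the paper's proof. Where you genuinely diverge is in how that bijection is established. The paper argues by contradiction: if some signed sum $\sum_k p_kw_k$ (with $p_n=1$) were missing from the product of diagonal entries, one could specialize to $W=\{-p_1,\ldots,-p_{n-1},\,n-1\}$, which is partitionable by construction while every \emph{other} assignment of signs gives a sum at least $1$; the determinant would then be nonzero, the linear system would be solvable, and one would obtain a Nullstellensatz certificate of non-partitionability for a partitionable set --- a contradiction. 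Counting ($2^{n-1}$ patterns, $2^{n-1}$ diagonal slots) then upgrades ``each pattern appears at least once'' to ``exactly once.'' Your route instead identifies $\Pi_k$, after the column relabelling $T\mapsto T\,\Delta\,\{n\}$ of Prop.~\ref{prop_pm}.\ref{prop_pm_lbl}, with translation by $\{k,n\}$ in the group of even subsets under symmetric difference, checks that $\{1,n\},\ldots,\{n-1,n\}$ are independent generators of that group of order $2^{n-1}$, and invokes the multiplicity-one decomposition of the regular representation of $(\mathbb{Z}/2)^{n-1}$ into its characters $\chi_\epsilon$. This is precisely the alternative the paper relegates to its closing Remark (citing \cite{MR1659232}); your write-up supplies the details that the Remark only sketches, and I see no gap in it. As for what each approach buys: the paper's argument avoids representation theory and stays inside the Nullstellensatz framework, but it is indirect --- it leans on the semantic chain ``nonzero determinant $\Rightarrow$ certificate exists $\Rightarrow$ $W$ not partitionable'' applied to a cleverly constructed instance, plus the final counting step. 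Your argument is direct and purely structural: it pins down the eigenstructure explicitly (the diagonalizing matrix $P$ is the $\pm1$ character table, a Hadamard-type matrix), delivers ``exactly once'' without any counting or appeal to the combinatorial meaning of the determinant, and your fallback via traces of products $\Pi_{k_1}\cdots\Pi_{k_r}$ and character orthogonality is a sound elementary substitute for quoting the regular-representation theorem.
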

\begin{proof}
Since $\Pi_1,\ldots, \Pi_n$ are a set of pairwise commuting diagonalizable matrices, via Prop. \ref{prop_sim_diag}, they are also simultaneously diagonalizable. Let $P$ be the $2^{n-1} \times 2^{n-1}$ matrix that simultaneously diagonalizes $\Pi_1,\ldots, \Pi_n$. Then
\begin{align*}
P^{-1}\Pi(W)P &= P^{-1}\Bigg( \sum_{k = 1}^n w_k \Pi_k \Bigg) P = \sum_{k = 1}^n w_k P^{-1} \Pi_k P\\ 
&=  \left[\begin{array}{cccc}
 \sum_{k=1}^n w_k \lambda_{k, 1} & & &\\
& \sum_{k=1}^n w_k \lambda_{k, 2} & &\\
&&\ddots &\\
&&& \sum_{k=1}^n w_k \lambda_{k, 2^{n-1}}\end{array} \right]
\end{align*}
where $\lambda_{k,1},\ldots, \lambda_{k,2^{n-1}} \in \{ -1, 1\}$ are the eigenvalues of $\Pi_k$. Then, we see
\begin{align*}
\det \big(\Pi(W)\big) &= \det\big(P^{-1}\big) \det\big(\Pi(W)\big) \det(P) = \det\big(P^{-1} \Pi(W) P\big)\\
&= \prod_{j=1}^{2^{n-1}}\bigg(\sum_{k=1}^n w_k \lambda_{k,j} \bigg)~.
\end{align*}
Therefore, we see that $\det \big(\Pi(W)\big)$ is a product of linear polynomials in $w_k$ with coefficients $\pm 1$, where the coefficient of $w_n$ is always $+1$.

In order to complete the proof, we must now show that \emph{every} linear polynomial of this form is present in the determinant (for example, for $n = 3$, the determinant is \emph{not} $(w_1 - w_2 + w_3)^4$). Thus, consider any linear polynomial $\sum_{k = 1}^n p_k w_k$ with $p_n = 1$ and $ p_k = \pm 1, k = 1, \ldots, n -1$, and assume this polynomial is \emph{not} a factor of $\det \big(\Pi(W)\big)$.

In order to derive a contradiction, set $w_k = -p_k$ for $k = 1,\ldots, n-1$, and $w_n = n-1$. Observe that $W = \{-p_1, -p_2,\ldots, -p_{n-1}, n-1\}$ is partitionable, according to the sign pattern of the $p_k$:
\begin{align*}
\sum_{p_k = +1} w_k - \sum_{p_k = - 1} w_k = \sum_{k = 1}^n p_k w_k = 0~.
\end{align*}
However, for every other assignment of $\lambda_n = 1, \lambda_k = \pm1$, $\sum_{k=1}^n \lambda_k w_k \geq 1$. Since  $\sum_{k = 1}^n p_k w_k$ is \emph{not} a factor of  $\det \big(\Pi(W)\big)$, then $\det \big(\Pi (W)\big) \neq 0$ for this $W$. However, we can now construct a Nullstellensatz certificate of non-partitionability, even though $W$ is partitionable, which is a contradiction. Since the linear factor was chosen at random, each of the $2^{n-1}$ linear polynomials $\sum_{k=1}^n p_k w_k, p_k = \pm 1, p_n = 1$ must appear as a factor of the determinant and
\begin{align*}
\det\big(\Pi(W)\big) &= \prod_{P \in \{-1,1\}^{n-1}} \Bigg( \bigg( \sum_{k=1}^{n-1} p_k w_k \bigg)+ w_n \Bigg) 
= \text{the partition polynomial}~. \qedhere
\end{align*}
\end{proof}

\begin{rem}
A short proof of Thm. \ref{thm_pm_det} may also be derived using
representation theory of finite groups, essentially, 
applying \cite[Thm. 2]{MR1659232}, as follows.
The $n-1$ non-identity permutation matrices $\Pi_1$,\dots, $\Pi_{n-1}$ giving $\Pi(W)$
in Definition~\ref{def_pi_k} generate 
an  elementary abelian 2-group $E_{2^{n-1}}$ of order $2^{n-1}$.
It can be shown that  $E_{2^{n-1}}$ acts fixed-point-free, i.e.
we have its regular representation.
Thus,  after the simultaneous diagonalization of the $\Pi_j$'s, the 
diagonal entries of the transformed $\Pi(W)$ will be in 1-to-1 correspondence with the
irreducible representations of   $E_{2^{n-1}}$, which are encoded by the $\pm 1$-signs
assigned to $\Pi_1$,\dots, $\Pi_{n-1}$. 
\end{rem}

\begin{ack}
The authors would like to acknowledge the support of NSF DSS-0729251, NSF-CSSI-0926618, DSS-0240058, the Rice University VIGRE program, and the Defense Advanced Research Projects Agency
under Award No. N66001-10-1-4040. Additionally, research on this projected was supported in part 
by a grant from the Israel Science Foundation and Singapore MOE Tier 2 Grant MOE2011-T2-1-090 (ARC 19/11). Finally, the authors thank the editors and anonymous referees for their time, energy and insight.


\end{ack}



\end{document}